\documentclass[preprint,hidelinks,onefignum,onetabnum]{siamart220329}

\usepackage{lipsum}
\usepackage{amsfonts}
\usepackage{graphicx}
\usepackage{epstopdf}
\usepackage{algorithmic}
\ifpdf
  \DeclareGraphicsExtensions{.eps,.pdf,.png,.jpg}
\else
  \DeclareGraphicsExtensions{.eps}
\fi

\headers{Density compression with non-negative tensor train}{X. Tang, R. Dwaraknath, L. Ying}

\title{Variational inference and density estimation with non-negative tensor train}

\author{Xun Tang\thanks{Correspondence author, Department of Mathematics, Stanford University, Stanford, CA 94305, USA. 
  (\email{xuntang@stanford.edu}).}
  \and
  Rajat Dwaraknath\thanks{Institute for Computational and Mathematical Engineering (ICME), Stanford University, Stanford, CA 94305, USA. 
  (\email{rajatvd@stanford.edu}).}
  \and
  Lexing Ying\thanks{Department of Mathematics and Institute for Computational and Mathematical Engineering (ICME), Stanford University, Stanford, CA 94305, USA. 
  (\email{lexing@stanford.edu})}
  \funding{X.T. and L.Y. are supported by AFOSR MURI award FA9550-24-1-0254.}
  }

\usepackage{amsopn}

\makeatletter
\newcommand*{\addFileDependency}[1]{%
  \typeout{(#1)}%
  \@addtofilelist{#1}%
  \IfFileExists{#1}{}{\typeout{No file #1.}}%
}
\makeatother

\usepackage{amsmath,amssymb,a4wide}
\usepackage{latexsym}
\usepackage{indentfirst}
\usepackage{caption}
\usepackage{placeins}
\usepackage{enumerate}
\usepackage{booktabs}
\usepackage{algorithm}
\usepackage{algorithmic}
\usepackage{multirow}
\usepackage{optidef}
\usepackage{graphicx,color}
\usepackage{diagbox}
\usepackage[normalem]{ulem}
\usepackage{hyperref}
\usepackage{cleveref}
\usepackage{subcaption} %

\ifpdf
  \hypersetup{
    pdftitle={NTT-for-Density},
    pdfauthor={X. Tang, R. Dwaraknath, and L. Ying}
  }
\fi

\newcommand{\f}{\ell}
\newcommand{\G}{G}
\renewcommand{\a}{\rho}

\usepackage{amsmath,amsfonts,bm}

\def\eqref#1{equation~\ref{#1}}

\def\1{\bm{1}}

\DeclareMathAlphabet{\mathsfit}{\encodingdefault}{\sfdefault}{m}{sl}
\SetMathAlphabet{\mathsfit}{bold}{\encodingdefault}{\sfdefault}{bx}{n}

\newcommand{\R}{\mathbb{R}}
\newcommand{\C}{\mathbb{C}}

\begin{document}

\maketitle
\begin{abstract}
  This work proposes an efficient numerical approach for compressing a high-dimensional discrete distribution function into a non-negative tensor train (NTT) format.
  The two settings we consider are variational inference and density estimation, whereby one has access to either the unnormalized analytic formula of the distribution or the samples generated from the distribution.
  In particular, the compression is done through a two-stage approach.
  In the first stage, we use existing subroutines to encode the distribution function in a tensor train format.
  In the second stage, we use an NTT ansatz to fit the obtained tensor train.
  For the NTT fitting procedure, we use a log barrier term to ensure the positivity of each tensor component, and then utilize a second-order alternating minimization scheme to accelerate convergence.
  In practice, we observe that the proposed NTT fitting procedure exhibits drastically faster convergence than an alternative multiplicative update method that has been previously proposed.
  Through challenging numerical experiments, we show that our approach can accurately compress target distribution functions.
\end{abstract}

\begin{keyword}
  Variational inference; Density estimation; Non-negative tensor factorization
\end{keyword}

\begin{MSCcodes}
  65C20, 15A69, 90C51
\end{MSCcodes}

\section{Introduction}
Compressing high-dimensional distribution functions is a ubiquitous task in data science and scientific computing.
This work considers the discrete case where the distribution function is a \(d\)-dimensional tensor with non-negative entries.
Writing \([n] := \{1, \ldots, n\}\), the ground truth is a \(d\)-dimensional tensor \(P \colon [n]^{d} \to \R_{\geq 0}\).
Our goal is to represent \(P\) with a tractable parameterized family of tensors.

We consider the task under two broad settings.
The first setting is variational inference \cite{jordan1999introduction,blei2017variational}, where one has noiseless access to arbitrary entries of \(P\).
In other words, for an arbitrary multi-index \((i_1, \ldots, i_d) \in [n]^d\), the variational inference setting assumes that one can query \(P(i_1, \ldots, i_d)\) up to an unknown normalization constant.
The second setting is density estimation \cite{silverman2018density}.
In this case, we assume that one has a collection of samples \(\left(y_{1}^{(j)}, \ldots, y_{d}^{(j)}\right)_{j=1}^{N} \subset [n]^d\) which are distributed according to \(P\).
In both cases, the goal is to find the best fit to \(P\) within a given parameterized family of tensors.

In this work, we propose to use the non-negative tensor train (NTT) ansatz to compress \(P\).
The ansatz is parameterized by a collection of non-negative tensor components \(\G = (G_{k})_{k = 1}^{d}\), where \(G_{1} \in \R_{\geq 0}^{ n \times \a_{1}}\), \(G_{i} \in \R_{\geq 0}^{\a_{i-1} \times n \times \a_{i}}\) for \(i = 2,\ldots, d-1\), and \(G_{d} \in \R_{\geq 0}^{\a_{d-1} \times n }\).
The ansatz parameterized by \(\G\) takes the following form:
\begin{equation}\label{eqn: def of TT}
  \begin{aligned}
    P_{\G}(i_{1}, \ldots, i_{d}) =\sum_{\alpha_{1}, \ldots, \alpha_{d-1}}G_{1}(i_{1}, \alpha_{1})G_{2}(\alpha_{1}, i_{2}, \alpha_{2})\cdots G_{d}(\alpha_{d-1}, i_{d}).
  \end{aligned}
\end{equation}
In particular, the non-negativity of \(G_k\) ensures that \(P_{\G}\) only has non-negative entries.
Once one obtains \(P_{\G}\) as an approximation of \(P\), one can use \(P_{\G}\) to perform efficient moment estimation and efficient sampling.
One can also use \(P_{\G}\) to calculate marginal distributions and conditional distributions.

One can see that the compression of \(P\) in an NTT ansatz is a generalization of non-negative matrix factorization (NMF) \cite{lee1999learning, lee2000algorithms, wang2012nonnegative} to multi-dimensional arrays.
When \(d = 2\), the formula in \Cref{eqn: def of TT} is exactly an NMF ansatz.
For general cases, NMF is known to be NP-hard \cite{vavasis2010complexity}. Therefore, our goal for NTT compression is to obtain a good empirical algorithm rather than rigorous convergence guarantees.
For NTT factorization, the ground truth tensor \(P\) is exponential-sized, and the downstream compression task is considerably more challenging than the NMF case.
For the variational inference setting and the density estimation setting, we introduce a compression algorithm with an \(O(d)\) computational complexity. Therefore, the proposed NTT compression scheme avoids the curse of dimensionality.

\subsection{Main contribution}
We go over the main idea for the NTT compression proposal.
This work takes a two-stage approach to perform NTT compression.
In the first stage, we perform a tensor train (TT) approximation of \(P\).
The tensor train ansatz is defined in \Cref{eqn: def of TT} if one relaxes entries of \(G_k\) to be defined over \(\R\) instead of \(\R_{\geq 0}\).
In the variational inference setting, we use the TT-cross method \cite{oseledets2010tt} to obtain a TT approximation of \(P\).
In the density estimation setting, we use the TT-sketch \cite{hur2023generative} method to obtain a TT approximation of \(P\).
When \(P\) admits a TT ansatz, the output of both methods robustly converges to \(P\) \cite{qin2022error,tang2023generative}.

From either TT-cross or TT-sketch, the output of the first stage is a tensor train ansatz \(\tilde{P}\) so that \(\tilde{P} \approx P\).
In the second stage, we use an NTT ansatz \(P_{\G}\) to fit \(\tilde{P}\).
Let \(\G = (G_1, \ldots, G_d)\) be the collection of non-negative tensor components.
The NTT fitting task is defined over a minimization task where the loss function is
\begin{equation}\label{eqn: loss function ver 1}
  \begin{aligned}
    \f(\G) = & \lVert P_{\G} - \tilde{P} \rVert_{F}^2 - \mu_1 \sum_{i_{1}, \alpha_{1}} \log\left(G_1(i_1, \alpha_{1})\right)  - \sum_{k = 2}^{d-1}\mu_k \sum_{\alpha_{k-1}, i_k, \alpha_{k}} \log\left(G_k(\alpha_{k-1}, i_k, \alpha_k)\right) \\
    -        & \mu_d \sum_{\alpha_{d-1}, i_{d}} \log\left(G_d(\alpha_{d-1}, i_{d})\right),
  \end{aligned}
\end{equation}
where \(\mu_{k}\) is the regularization strength parameter on \(G_k\).
The log barrier term in \Cref{eqn: loss function ver 1} automatically ensures the positivity of all entries for each \(G_k\).

We detail how to optimize over the loss function \(\f\) in \Cref{eqn: loss function ver 1}.
As is typical in tensor network optimization, we employ an alternating minimization strategy, where we optimize one tensor component \(G_k\) at a time.
The squared Frobenius loss \(\lVert P_{\G} - \tilde{P} \rVert_{F}^2\) is quadratic as a function of \(G_k\).
Therefore, the loss function \(\f\) in \Cref{eqn: loss function ver 1} is strongly convex when viewed as a function of \(G_k\). The optimization over \(G_k\) is performed using a second-order Newton step. The \(\mu_k\) parameters are decreased, typically exponentially, so that the log barrier term in \(\f\) has a small coefficient in the later stage of the optimization.

The NTT fitting proposal under alternating minimization and log barrier shows remarkable empirical success.
Through challenging numerical instances in \Cref{sec: numerics}, we show that the NTT ansatz \(P_{\G}\) approximates \(\tilde{P}\) quite accurately.
In some cases, we observe that the squared Frobenius loss can achieve machine accuracy.
A similar two-stage approach has been proposed in \cite{shcherbakova2019nonnegative} with a first-order multiplicative update method for NTT fitting. We use the method in \cite{shcherbakova2019nonnegative} as a benchmark, and we see that it converges quite slowly when compared to our approach.
We test the first-order method and demonstrate that its accuracy is quite low, even after a substantial number of iterations.

\subsection{Related work}

\paragraph{Non-negative matrix factorization}
Non-negative matrix factorization (NMF) has been extensively explored as a subroutine in data analysis \cite{lee2000algorithms, wang2012nonnegative}.
NMF is used in finding intrinsic objects in image data and in finding semantic features of text data \cite{lee1999learning, donoho2003does}.
It is well-known that NMF is NP-hard \cite{vavasis2010complexity}.
The Lee-Seung algorithm in \cite{lee2000algorithms} suggests two methods to perform multiplicative updates on the matrix factors, and they respectively enjoy a monotonic improvement guarantee on the Frobenius norm loss and the Kullback-Leibler divergence loss.
Subsequent works have used multiplicative updates based on other divergence metrics \cite{cichocki2008non, cichocki2011generalized}.
Alternating minimization with second-order optimization has been considered in the NMF setting with quasi-Newton steps \cite{zdunek2007nonnegative} and non-negative least-squares steps \cite{kim2007fast, berry2007algorithms}.
In the NMF setting, it is well-known that multiplicative update methods are slowly convergent \cite{zdunek2007nonnegative}.
However, multiplicative update methods have considerably cheaper per-iteration complexity than the second-order methods.

\paragraph{Tensor train approximation}
Tensor train (TT) \cite{oseledets2011tensor} is a tensor network with a one-dimensional structure.
This ansatz is known in the physics community as the matrix product state under open boundary conditions \cite{white1992density}.
A high-dimensional tensor \(P\) is well-represented by a tensor train if it satisfies a series of approximate low-rank conditions for its unfolding matrices \cite{schneider2014approximation, tang2025tensor}.
When the ground truth \(P\) has a TT ansatz, running the TT-cross and TT-sketch algorithm outputs a TT ansatz \(\tilde{P}\) so that \(\tilde{P} \approx P\).
The convergence guarantee is given in the infinity norm for TT-sketch \cite{tang2023generative,tang2025initialization}, and a similar guarantee is made in the Frobenius norm for TT-cross \cite{qin2022error}.
Despite the recovery guarantee, for compressing high-dimensional distribution tensors, neither TT-cross nor TT-sketch can guarantee that the obtained ansatz has only non-negative entries.
Therefore, when non-negativity is required, it is more desirable to use a non-negative tensor ansatz to fit the obtained TT ansatz.

\subsection{Outline}
This work is organized as follows.
\Cref{sec: main alg} details the proposed NTT compression subroutine for variational inference and density estimation. \Cref{sec: self-concordance} analyzes the convergence of the Newton step through a self-concordance analysis. \Cref{sec: numerics} shows the performance of the proposed ansatz on practical variational inference and density estimation. \Cref{sec: conclusion} gives concluding remarks.

\section{Main algorithm}\label{sec: main alg}
This section goes through the non-negative tensor train compression subroutine for variational inference and density estimation.
\Cref{sec: tensor-train} gives background information on the tensor train ansatz and covers the first stage of NTT compression, where a tensor train compression \(\tilde{P}\) is given to approximate \(P\). \Cref{sec: NTT fitting} presents the second stage of NTT compression where an NTT ansatz \(P_{\G}\) is used to approximate \(\tilde{P}\). \Cref{sec: NTT fitting extra} details the numerical techniques used to accelerate the Newton step during each alternating minimization step under NTT fitting.

\paragraph{Motivating example}

One prevalent type of discrete distribution tensors comes from a grid-based discretization of unnormalized continuous Boltzmann distribution functions.
We consider an unnormalized distribution function \(p \colon [-a,a]^{d} \to \R_{\geq 0}\) and let \((x_{i})_{i = 1}^{n}\subset [-a, a]\) be a choice of grid points on \([-a, a]\).
One can construct a discrete distribution tensor \(P\) by setting
\[
  P(i_1, \ldots, i_d) = p(x_{i_1}, \ldots, x_{i_d}).
\]

One example of \(p\) is the Ginzburg-Landau model, which is used for studying the phenomenological theory of superconductivity \cite{weinan2004minimum, ginzburg2009theory}.
We take the 1D setting, and the associated unnormalized distribution function is
\[
  p(z_1, \ldots, z_d) = \exp\left(-\frac{\gamma}{2} \sum_{i = 1}^{d-1} \left(z_{i} - z_{i+1}\right)^2 - \frac{\lambda}{2} \sum_{i = 1}^{d} \left(1 - z_i^2\right)^2\right),
\]
where \(\gamma\) controls the correlation strength and \(\lambda\) controls the strength of the double-well term.
One can define \(P\) accordingly from \(p\).
In the context of variational inference, evaluating entries of \(P\) can be achieved by evaluating \(p\); thus, accessing individual entries of \(P\) is typically inexpensive.
Samples of \(P\) can be obtained by Monte-Carlo Markov chain (MCMC) simulation based on the analytic formula of \(P\) \cite{liu2001monte}.
Once the samples are obtained, they can be used for density estimation.

\subsection{Stage one: tensor train compression}\label{sec: tensor-train}
We go through the first stage where one obtains a tensor train approximation \(\tilde{P}\) from either the analytic formula of \(P\) or the samples of \(P\).
This subsection is self-contained, and further technical details can be found in \cite{oseledets2010tt,hur2023generative}.

\paragraph{Notation}
For notational compactness, we introduce several shorthand notations for simple derivations.
For an index set \(S \subset [d]\), we let \(i_{S}\) stand for the subvector with entries from the index set $S$.

\paragraph{Tensor train}
We define a tensor train (TT) ansatz \(P\colon [n]^{d} \to \mathbb{R}\) in terms of tensor components \((F_{k})_{k = 1}^{d}\), where \(F_{1} \in \R^{ n \times r_{1}}\), \(F_{i} \in \R^{r_{i-1} \times n \times r_{i}}\) for \(i = 2,\ldots, d-1\), and \(F_{d} \in \R^{r_{d-1} \times n }\).
Using the multi-index notation, the equation for \(P\) is written as follows:
\begin{equation}\label{eq: TT}
  P(i_{[d]}) =\sum_{\alpha_{[d-1]}}F_{1}(i_{1}, \alpha_{1})F_{2}(\alpha_{1}, i_{2}, \alpha_{2})\cdots F_{d}(\alpha_{d-1}, i_{d}).
\end{equation}
The TT ansatz is versatile, and this representation does not suffer from the curse of dimensionality for many important tasks.
One can see from \Cref{eq: TT} that evaluating a single entry of \(P\) has an \(O(d)\) complexity.
To calculate the normalization constant \(\sum_{i_{[d]}}P(i_{[d]})\), one computes
\begin{equation}\label{eq: TT normalization constant}
  \sum_{i_{[d]}}
  P(i_{[d]}) =\sum_{\alpha_{[d-1]}}\left(\sum_{i_{1}}F_{1}(i_{1}, \alpha_{1})\right)\left( \sum_{i_2}F_{2}(\alpha_{1}, i_{2}, \alpha_{2})\right)\cdots \left(\sum_{i_d} F_{d}(\alpha_{d-1}, i_{d})\right).
\end{equation}
One can see that the expression in \Cref{eq: TT normalization constant} can be computed by a sequence of matrix-vector multiplications.

\paragraph{Sketched-linear equation for \(F_k\)}
Both the TT-cross algorithm and the TT-sketch algorithm can be derived by performing sketching.
The derivation is done in \cite{tang2025wavelet} for any tensor network defined over general tree structures.
For the reader's convenience, we go through the derivation for the tensor train case.

We detail how to form a sketched linear equation for the tensor component \(F_{k}\).
For simplicity, we first assume that \(k \not \in \{1, d\}\) so that \(k\) is not a boundary node.
We assume that the ground truth tensor \(P\) admits a tensor train ansatz.
Then, \Cref{eq: TT} implies that there exist two tensors \(P_{>k} \colon [r_{k}] \times [n]^{d-k} \to \R, P_{<k} \colon [n]^{k-1} \times [r_{k-1}] \to \R\), such that the following equation holds:
\begin{equation}\label{eqn: low-rank}
  P(i_{1}, \ldots, i_{d}) = \sum_{\alpha_{k-1}, \alpha_{k}}P_{<k}(i_{[k-1]}, \alpha_{k-1})F_{k}(\alpha_{k-1}, i_{k}, \alpha_{k})P_{>k}(\alpha_{k}, i_{[d] - [k]}).
\end{equation}

One can see that \Cref{eqn: low-rank} is an exponentially-sized over-determined system of linear equations on \(F_{k}\).
Therefore, it is natural to perform sketching to have a tractable system of linear equations for \(F_{k}\).
We let \(\tilde{r}_{k-1}, \tilde{r}_{k}\) be two integers such that \(\tilde{r}_{k-1} \geq r_{k-1}\) and \(\tilde{r}_{k} \geq r_{k}\).
We introduce two sketch tensors:
\(S_{>k} \colon [n]^{d-k} \times [\tilde{r}_{k}] \to \R, S_{<k} \colon [\tilde{r}_{k-1}] \times [n]^{k-1} \to \R\).
By contracting \Cref{eqn: low-rank} with \(S_{>k} \) and \(S_{<k}\) respectively on the \(i_{[d] - [k]}\) and \(i_{[k-1]}\) index, one obtains
\begin{equation}\label{eqn: sketched linear system for F}
  \sum_{\alpha_{k-1}, \alpha_{k}}A_{<k}(\zeta_{k-1}, \alpha_{k-1})F_{k}(\alpha_{k-1}, i_{k}, \alpha_{k})A_{>k}(\alpha_{k}, \omega_{k}) = B_{k}(\zeta_{k-1}, i_k, \omega_{k}),
\end{equation}
where \(A_{>k}, A_{<k}\) are respectively the contraction of \(P_{>k}, P_{<k}\) by \(S_{>k}, S_{<k}\) and \(B_{k}\) is the contraction of \(P\) by \(S_{>k} \otimes S_{<k} \).
By writing \(A_{k} = A_{>k} \otimes A_{<k}\) and by appropriate permutation and grouping of indices, one can see that \Cref{eqn: sketched linear system for F} can be viewed as a linear system \(A_{k}F_{k} = B_{k}\).
Thus, after one obtains \(A_{>k}, A_{<k}\) and \(B_{k}\), one can solve for \(F_{k}\).

For the boundary case of \(k = 1\) and \(k = d\), there exist \(P_{>1}\) and \(P_{<d}\) such that the following equation holds:
\begin{equation}
  P(i_{1}, \ldots, i_{d}) = \sum_{\alpha_{1}}F_{1}(i_{1}, \alpha_{1})P_{>1}(\alpha_{1}, i_{[d] - \{1\}}), \,\,\,
  P(i_{1}, \ldots, i_{d}) = \sum_{\alpha_{d-1}}P_{<d}(i_{[d-1]}, \alpha_{d-1})F_{d}(\alpha_{d-1}, i_{d}).
\end{equation}
Likewise, one can define sketch tensors \(S_{>1}\) and \(S_{<d}\).
Contraction of with \(S_{>1}\) and \(S_{<d}\) leads to the sketched linear equation for \(F_{1}\) and \(F_{d}\) as follows:
\begin{equation}\label{eqn: low-rank boundary case}
  B_{1}(i_1, \omega_1) = \sum_{\alpha_{1}}F_{1}(i_{1}, \alpha_{1})A_{>1}(\alpha_{1}, \omega_{1}),\quad
  B_{d}(\zeta_{d-1}, i_d) = \sum_{\alpha_{d-1}}A_{<d}(\zeta_{d-1}, \alpha_{d-1})F_{d}(\alpha_{d-1}, i_{d}).
\end{equation}

\subsubsection{TT-cross algorithm for variational inference}
We cover the procedure of TT-cross for the setting of variational inference.
The input is a function handle in which one can query an arbitrarily chosen entry of \(P\).
The output is a tensor train approximation for \(P\).

\paragraph{Obtain \(B_{k}\) by \(P\) queries}
The TT-cross algorithm obtains \(B_k\) by setting \(S_{>k}\) and \(S_{<k}\) to be sparse tensors.
The sparsity of the sketch tensors reduces obtaining \(B_k\) to querying entries of \(P\).
For \(S_{>k}\) we select \(\tilde{r}_{k}\) indices \(I^{>k} =\{w^{\omega_{k}} = (w_{j}^{\omega_{k}})_{j \in [d] - [k]}\}_{\omega_{k} \in [\tilde{r}_{k}]} \subset [n]^{d-k}\).
For \(S_{<k}\) we select \(\tilde{r}_{k-1}\) indices \(I^{<k} =\{v^{\zeta_{k-1}} = (v_{j}^{\zeta_{k-1}})_{j \in [k-1]}\}_{\zeta_{k-1} \in [\tilde{r}_{k-1}]} \subset [n]^{k-1}\).
Let \(e_{l}\) be the \(l\)-th standard basis in \(\R^{n}\).
Corresponding to the selected indices, we construct the sketch tensor \(S_{>k}, S_{<k}\) to be defined by
\begin{equation}\label{eqn: def of S_k in VI}
  S_{>k}(:, \omega_{k}) = \bigotimes_{j=k+1}^{d}e_{w^{\omega_{k}}_{j}}, \quad S_{<k}(\zeta_{k-1},:) = \bigotimes_{j=1}^{k-1}e_{v^{\zeta_{k-1}}_{j}}.
\end{equation}

The construction of \(S_{>k}, S_{<k}\) in \Cref{eqn: def of S_k in VI} immediately implies the following equation for \(B_k\):
\begin{equation}\label{eqn: def of B_k in VI}
  B_{k}(\zeta_{k-1}, i_{k}, \omega_{k}) = P(v_{1}^{\zeta_{k-1}}, \ldots, v_{k-1}^{\zeta_{k-1}}, i_k, w_{k+1}^{\omega_{k}}, \ldots, w_{d}^{\omega_{k}}).
\end{equation}

When \(k = 1\) and \(k = d\), one can likewise define \(S_{>1}\) and \(S_{<d}\).
The construction of \(B_{1}\) and \(B_{d}\) are given by
\begin{equation}\label{eqn: def of B_k in VI boundary}
  B_{1}(i_{1}, \omega_{1}) = P(i_1, w_{2}^{\omega_{1}}, \ldots, w_{d}^{\omega_{1}}), \quad B_{d}(\zeta_{d-1}, i_{d}) = P(v_{1}^{\zeta_{d-1}}, \ldots, v_{d-1}^{\zeta_{d-1}}, i_d).
\end{equation}

\paragraph{Obtain \(A_k\) by \(P\) queries + SVD}
To obtain \(A_{>k}, A_{<k}\), we take the approach in \cite{tang2025wavelet}.
We assume that all sketch tensors \(\{S_{>1}\} \cup \{S_{>j}, S_{<j}\}_{j=2}^{d-1} \cup \{S_{<d}\}\) are defined.
To obtain \(A_{>k}\), we construct a tensor \(Z_{>k}\) from contracting \(P\) with \(S_{<(k+1)} \otimes S_{>k}\).
The construction of the sketch tensors in \Cref{eqn: def of S_k in VI} implies the following equation for \(Z_k\):
\begin{equation}\label{eqn: Z_k_v interpolation formula}
  Z_{k}(\zeta_{k}, \omega_{k}) = P(v_{1}^{\zeta_{k}}, \ldots, v_{k-1}^{\zeta_{k}}, w_{k+1}^{\omega_{k}}, \ldots, w_{d}^{\omega_{k}}).
\end{equation}

One then obtains \(A_{<(k+1)}, A_{>k}\) through SVD of \(Z_{k}\), resulting in the best rank-\(r_k\) factorization \(Z_{k}(\zeta_{k}, \omega_{k}) = \sum_{\alpha_{k}}A_{<(k+1)}(\zeta_{k}, \alpha_{k})A_{>k}(\alpha_{k}, \omega_{k})\), which is typically done by truncated SVD.
One can see that obtaining \(A_{<k}\) is done through the best rank-\(r_{k-1}\) factorization of \(Z_{k-1}\).

\paragraph{The choice of sketch tensors}
When \(P\) is exactly a TT ansatz, one can achieve exact recovery as long as \(Z_{k}\) as defined in \Cref{eqn: Z_k_v interpolation formula} is of rank \(r_{k}\) for \(k = 1, \ldots, d-1\).
When \(P\) is well-approximated by a TT ansatz, the choice of the sketch tensors leads to different approximation errors, and an error analysis can be found in \cite{qin2022error}.
The TT-cross algorithm in \cite{oseledets2010tt} takes a particular choice of sketch tensor with a maximal volume heuristic, and we refer the readers to \cite{oseledets2010tt} for the details.

\subsubsection{TT-sketch algorithm for density estimation}
We cover here the procedure of the TT-sketch for the setting of density estimation.
The input to the algorithm is a collection of \(N\) samples \(\left(y_{1}^{(j)}, \ldots, y_{d}^{(j)}\right)_{j=1}^{N} \subset [n]^d\) distributed according to \(P\).
The output is a tensor train approximation for \(P\).

\paragraph{Obtain \(B_{k}\) by observable estimations}
In the TT-sketch algorithm, the tensors \(S_{>k}\) and \(S_{<k}\) are chosen so that their entry-wise evaluations are efficient.
We see that obtaining \(B_k\) in such cases reduces to repeated evaluations on the sketch tensors.
The construction of \(B_k\) implies the following equation for \(B_k\):
\begin{equation}\label{eqn: def of B_k in DE}
  \begin{aligned}
    B_{k}(\zeta_{k-1}, i_{k}, \omega_{k}) & = \sum_{i_{[k-1]}, i_{[d] - [k]}}P(i_{[d]})S_{<k}(\zeta_{k-1}, i_{[k-1]})S_{>k}(i_{[d] - [k]}, \omega_{k}) \\
                                          & \approx
    \frac{1}{N}\sum_{j = 1}^{N}S_{<k}(\zeta_{k-1}, y^{(j)}_{1}, \ldots, y^{(j)}_{k-1})\mathbf{1}(y^{(j)}_{k} = i_k)S_{>k}(y^{(j)}_{k+1}, \ldots, y^{(j)}_{d}, \omega_{k}),
  \end{aligned}
\end{equation}
where \(\mathbf{1}(z = i_k) = 1\) if and only if \(z = i_k\). When \(k = 1\) and \(k = d\), the construction of \(B_{1}\) and \(B_{d}\) are approximated by
\begin{equation}\label{eqn: def of B_1 in DE boundary}
  B_{1}(i_{1}, \omega_{1}) \approx \frac{1}{N}\sum_{j = 1}^{N}\mathbf{1}(y^{(j)}_{1} = i_1)S_{>1}(y^{(j)}_{k+1}, \ldots, y^{(j)}_{d}, \omega_{1}),
\end{equation}
and
\begin{equation}\label{eqn: def of B_d in DE boundary}
  B_{d}(\zeta_{d-1}, i_{d}) \approx \frac{1}{N}\sum_{j = 1}^{N}S_{<d}(\zeta_{d-1}, y^{(j)}_{1}, \ldots, y^{(j)}_{d-1})\mathbf{1}(y^{(j)}_{d} = i_d).
\end{equation}

\paragraph{Obtain \(A_k\) by observable estimations + SVD}
The construction of \(A_{>k}, A_{<k}\) is identical to the TT-cross case by constructing \(Z_{k}\), which is obtained by contracting \(P\) with \(S_{<(k+1)} \otimes S_{>k}\).
The formula for \(Z_k\) is:
\begin{equation}\label{eqn: Z_k DE formula}
  Z_{k}(\zeta_{k}, \omega_{k}) \approx \frac{1}{N}\sum_{j = 1}^{N}S_{<(k+1)}(\zeta_{k}, y^{(j)}_{1}, \ldots, y^{(j)}_{k})S_{>k}(y^{(j)}_{k+1}, \ldots, y^{(j)}_{d}, \omega_{k}).
\end{equation}

One then obtains \(A_{>k}\) through the best rank-\(r_k\) factorization of \(Z_{k}\) in the same way as in the TT-cross case, and \(A_{<k}\) is likewise obtained through the best rank-\(r_{k-1}\) factorization of \(Z_{k-1}\).

\paragraph{Error analysis of TT-sketch}
When \(P\) admits a TT ansatz and \(N \to \infty\), the TT-sketch algorithm converges to \(P\).
In \cite{tang2023generative}, it is proven that the approximation error \(\varepsilon\) goes to zero at an asymptotic rate of \(\varepsilon = O(\frac{d}{\sqrt{N}})\).
In TT-sketch, a good choice of sketch tensors leads to significant practical improvement in the approximation.
The design of sketch tensors in the TT-sketch case is currently ad hoc for general problems, but choosing a large number of sketch tensors is typically sufficient to ensure good practical performance.

\subsubsection{Summary of first stage}

We summarize the first stage of NTT compression in \Cref{alg: first stage}, which contains the fully specified implementation of TT-cross and TT-sketch.
Both methods are quite efficient in compressing \(P\) as a TT ansatz.
One can see that NTT is a subclass of TT, and therefore, the representation power of TT is stronger than that of NTT.
Therefore, the TT compression task should be successful as long as \(P\) can be well-represented by an NTT ansatz.
Conversely, failure in the first stage is a strong indication that one should use another ansatz to compress \(P\).

\begin{algorithm}
  \caption{Tensor train compression of distribution tensors}
  \label{alg: first stage}
  \begin{algorithmic}[1]
    \REQUIRE Sample \(\{y^{(i)}\}_{i = 1}^{N} \subset [n]^d\) distributed according to \(P\) or access to \(P \colon [n]^d \to \R\).
    \REQUIRE Collection of sketch tensors \(\{S_{>k}, S_{<k}\}\)
    \REQUIRE Target internal ranks $\{r_{k}\}$ for \(k = 1, \ldots, d-1\).
    \FOR{\(k = 1, \ldots, d-1\)}
    \STATE Obtain \(Z_{k}\) by \Cref{eqn: Z_k_v interpolation formula} or \Cref{eqn: Z_k DE formula}.
    \STATE Obtain \(A_{<(k+1)}\) and \(A_{>k}\) respectively as the left and right factor of the best rank \(r_{k}\) factorization of \(Z_{k}\).
    \ENDFOR
    \FOR{\(k = 1, \ldots, d\)}
    \IF{\(k = 1\)}
    \STATE Obtain \(B_{1}\) by \Cref{eqn: def of B_k in VI boundary} or \Cref{eqn: def of B_1 in DE boundary}.
    \STATE Obtain \(F_1\) by solving the linear equation in \Cref{eqn: low-rank boundary case}.
    \ENDIF
    \IF{\(k = d\)}
    \STATE Obtain \(B_{d}\) by \Cref{eqn: def of B_k in VI boundary} or \Cref{eqn: def of B_d in DE boundary}.
    \STATE Obtain \(F_d\) by solving the linear equation in \Cref{eqn: low-rank boundary case}.
    \ENDIF
    \IF{\(k \not \in \{1, d\}\)}
    \STATE Obtain \(B_{k}\) by \Cref{eqn: def of B_k in VI} or \Cref{eqn: def of B_k in DE}.
    \STATE Obtain \(F_k\) by solving the linear equation in \Cref{eqn: sketched linear system for F}.
    \ENDIF
    \ENDFOR
    \STATE Output the tensor train \(\tilde{P}\) with tensor component \((F_{k})_{k = 1}^{d}\).
  \end{algorithmic}
\end{algorithm}

\subsection{Stage two: non-negative tensor train fitting}\label{sec: NTT fitting}
We go through the second stage where one fits a non-negative tensor train ansatz \(P_{\G}\) against the reference tensor train \(\tilde{P}\).
Throughout this subsection, the reference TT ansatz \(\tilde{P}\) is defined with tensor component \((F_{k})_{k = 1}^{d}\), and the parameters for the NTT ansatz is the tensor component \(\G = (G_{k})_{k = 1}^{d}\).
We use \((\a_{k})_{k = 1}^{d-1}\) and \((r_{k})_{k = 1}^{d-1}\) to respectively denote the internal rank of \(P_{\G}\) and \(\tilde{P}\).
The tensor components \((F_{k})_{k = 1}^{d}\) are fixed with \(F_{1} \in \R^{ n \times r_{1}}\), \(F_{i} \in \R^{r_{i-1} \times n \times r_{i}}\) for \(i = 2,\ldots, d-1\), and \(F_{d} \in \R^{r_{d-1} \times n }\).
The tensor components \((G_{k})_{k = 1}^{d}\) are the optimization variables with \(G_{1} \in \R_{\geq 0}^{ n \times \a_{1}}\), \(G_{i} \in \R_{\geq 0}^{\a_{i-1} \times n \times \a_{i}}\) for \(i = 2,\ldots, d-1\), and \(G_{d} \in \R_{\geq 0}^{\a_{d-1} \times n }\).
While the internal ranks are not necessarily coupled, we typically choose \(\a_{k} \geq r_{k}\) to account for the fact that NTT has less representational power than TT.

\paragraph{Loss function in NTT fitting}
The NTT fitting procedure takes a variational perspective, where the loss function is defined as the squared Frobenius norm loss, along with log barrier functions defined on each tensor component.
The loss function is
\begin{equation}\label{eqn: NTT loss}
  \f(\G) = \lVert P_{\G} - \tilde{P} \rVert_{F}^2 + \sum_{i = 1}^{d}\mu_k \f_{k}(G_{k}),
\end{equation}
where \(\mu_{k}\) is the regularization parameter on \(\f_{k}\), and \(\f_{k}\) is the log barrier function placed on \(G_{k}\).
For \(k = 2, \ldots, d - 1\), one has
\[
  \f_{k}(G_k) = -\sum_{\alpha_{k-1}, i_k, \alpha_{k}} \log\left(G_k(\alpha_{k-1}, i_k, \alpha_k)\right),
\]
and \(\f_{k}\) for the boundary case of \(k = 1\) and \(k = d\) are respectively defined by
\[
  \f_1 = -\sum_{i_{1}, \alpha_{1}} \log\left(G_1(i_1, \alpha_{1})\right), \quad \f_{d} =-\sum_{\alpha_{d-1}, i_{d}} \log\left(G_d(\alpha_{d-1}, i_{d})\right).
\]
Moreover, for future reference, we write \(\f_{0}(\G) := \lVert P_{\G} - \tilde{P} \rVert_{F}^2\).
The loss function \(\f\) is thus defined by \(\f(\G) = \f_0(\G) + \sum_{k=1}^{d}\mu_k\f_k(G_k)\).

We remark that the non-negativity requirement for each \(G_k\) is automatically satisfied with the log barrier function \(\f_{k}\), and so the optimization task over \(\f\) is an unconstrained minimization problem.
Optimization with this loss function is essentially an implementation of the interior point method \cite{nocedal1999numerical}, with the log function chosen as the barrier function.

\paragraph{Single tensor component update}
We optimize the loss function \(\f\) with alternating minimization.
In other words, throughout the optimization procedure, we select one node \(G_k\) and optimize over \(G_k\) while keeping the other components fixed.
We describe the procedure for updating a single tensor component, \(G_k\).
The function \(\f_0\) can be written as
\[
  \f_{0}(\G) = \left<\tilde{P}, \tilde{P}\right> - 2\left<\tilde{P}, P_{\G}\right> + \left<P_{\G}, P_{\G}\right>,
\]
which is a quadratic function in \(G_k\).
Moreover, \(\f_k\) is by construction convex in \(G_{k}\).
Therefore, we use the Newton step for the alternating minimization.
Therefore, for updating each \(G_{k}\), we compute the update direction by taking \(\delta G_{k} = -(\nabla^{2}_{G_k} \f)^{-1}(\nabla_{G_k} \f)\).
One then performs an update \(G_k \gets G_{k} + \tau \delta G_{k}\), where \(\tau \in (0, 1)\) is chosen through a backtracking line search procedure.
For simplicity, we only perform one Newton step when updating each tensor component \(G_{k}\).

\paragraph{Optimization procedure}
We summarize the overall NTT fitting procedure in \Cref{alg: alternating minimization}.
The fitting is done by performing a series of single tensor component updates on \(G_k\) by iterating over \(k \in [d]\).
Moreover, we gradually decrease the coefficients on the log barrier by decreasing each \(\mu_k\).
After each sweep over \(k \in [d]\), we decrease the value of \((\mu_{k})_{k = 1}^{d}\).
The regularization schedule \(\{\mu_{k}^{i}\}_{i = 1}^{L}\) in \Cref{alg: alternating minimization} can be fixed as shown in \Cref{alg: alternating minimization}, but one can also choose \(\mu_{k}^{i}\) dynamically.
Lastly, the node schedule in \Cref{alg: alternating minimization} performs a forward sweep from \(1\) to \(d\) and then a backward sweep from \(d\) to \(1\).
The reason for the node update schedule is to cache and reuse intermediate tensor contractions for optimal efficiency.
As a result, running \Cref{alg: alternating minimization} has only a time complexity of \(O(dL)\).

\begin{algorithm}
  \caption{Non-negative tensor train fitting.}
  \label{alg: alternating minimization}
  \begin{algorithmic}[1]
    \REQUIRE Reference tensor train ansatz \(\tilde{P}\).
    \REQUIRE Total iteration number \(L\).
    \REQUIRE Regularization parameter schedule \(\{\mu_{k}^{i}\}_{i = 1}^{L}\) for \(k = 1, \ldots, d\).
    \REQUIRE Initial non-negative tensor train ansatz \(P_{\G}\) with tensor component \(G = (G_k)_{k = 1}^{d}\)
    \FOR{\(i = 1, \ldots, L\)}
    \STATE  \(\mu_{k} = \mu_{k}^{i}\) for \(k = 1,\ldots, d\).
    \STATE  \(\f = \f_0(\G) + \sum_{k=1}^{d}\mu_k\f_k(G_k)\).
    \FOR{\(k = 1,\ldots, d\) \textbf{and then} \(k = d,\ldots,1\)}
    \STATE  \(\Delta G_{k} = -(\nabla^{2}_{G_k} \f)^{-1}(\nabla_{G_k} \f)\).
    \STATE \(\tau \gets \text{Line\_search}(\f, G_k, \Delta G_k) \).
    \STATE \(G_k \gets G_k + \tau  \Delta G_k\).
    \ENDFOR
    \ENDFOR

    \STATE Output the tensor train \(P_{\G}\) with tensor component \((G_{k})_{k = 1}^{d}\).
  \end{algorithmic}
\end{algorithm}

\paragraph{Choice of \(\mu_k\)}
The choice of \(\mu_k\) is delicate in the NTT fitting setting.
For example, an NTT ansatz \(\G = (G_{k})_{k =1}^{d}\) represents the same tensor if \((G_{1}, G_{2})\) is replaced by \((cG_{1}, \frac{1}{c}G_{2})\).
Therefore, a good choice of \(\mu_{k}\) depends on the scale of \(G_{k}\).
Likewise, to ensure successful training, \(\mu_{k}\) needs to be tuned according to the scale of \(\f_0\).
However, as \(\lVert \tilde{P} \rVert_{F}\) might be exponentially large or small, the scale of \(\f_0\) varies dramatically in practice.

The aforementioned scaling issues can be addressed by the adaptive regularization strategy to be introduced in \Cref{sec: NTT fitting extra}.
However, one can also use a simple fixed regularization schedule after applying appropriate normalizations to the stated fitting problem.
First, we apply a normalization on \(\tilde{P}\).
By scaling \(\tilde{P}\) multiplicatively, one can ensure that \(\lVert \tilde{P} \rVert_{F} = 1\).
Second, for the initialization \(\G = (G_{k})_{k = 1}^{d}\) for \(P_{\G}\), one can apply multiplicative update on \(G_{k}\) such that \(\lVert P_{\G} \rVert = 1\) and that \(\lVert G_{k} \rVert_{F}\) is of roughly the same scale for all \(G_{k}\).
After the normalization is applied, one sees that the loss function \(\f_0\) is roughly \(O(1)\), and each tensor component in \(\G\) is of roughly the same magnitude.

In the numerical experiments in \Cref{sec: numerics}, we use a simple fixed regularization schedule.
In the notation of \Cref{alg: alternating minimization}, for each \(k = 1, \ldots, d\), the regularization schedule takes \(\mu_{k}^{1} = 10^{-3}\) and it takes \(\mu_{k}^{i + 1} = \max(1/2\mu_{k}^{i}, 10^{-12})\).
Therefore, we take an initial value of \(\mu_k = 10^{-3}\) and we decrease \(\mu_k\) by half until \(\mu_{k}\) reaches \(10^{-12}\).
We note that \(\f_0\) is strongly convex in \(G_k\), and Lemma 3.13 in \cite{forsgren2002interior} implies that the optimal \(G_k\) under the log barrier differs from the optimal unregularized \(G_k\) by an \(O(\mu_k)\) term.

\subsubsection{Implementation details in NTT fitting}\label{sec: implementation details}
We explain how the terms in the Newton step of \Cref{alg: alternating minimization} are calculated through tensor contractions.
The calculations are technical and can be skipped for the first read.
One sees that \(\nabla_{G_k}\f_{k}\) and \(\nabla^2_{G_k}\f_{k}\) are easy to calculate.
Thus, to perform the Newton step, it suffices to detail how to calculate \(\nabla_{G_k}\f_{0}\) and \(\nabla^2_{G_k}\f_{0}\).
One can check that \(\nabla_{G_k}\f_{0} = \nabla_{G_k}\left<P_{\G}, P_{\G}\right> - 2 \nabla_{G_k}\left<P_{\G}, \tilde{P}\right>\) and \(\nabla^2_{G_k}\f_{0} = \nabla^2_{G_k}\left<P_{\G}, P_{\G}\right>\).
Therefore, we shall go through how one calculates the gradient and hessian term for \(\left<P_{\G}, P_{\G}\right>\) and the gradient term for \(\left<P_{\G}, \tilde{P}\right>\).

We first calculate the gradient and hessian with respect to \(\left<P_{\G}, P_{\G}\right>\).
To do so, we define two tensors \(P_{\G, >k}\colon [\a_{k}]\times [n]^{d-k} \to \R_{\geq 0} \), \(P_{\G, <k} \colon [n]^{k-1} \times [\a_{k-1}] \to \R_{\geq 0}\) by the following equations:
\[
  P_{\G, >k}(\beta_{k}, i_{[d] - [k]}) = \sum_{\beta_{k+1}, \ldots, \beta_{d-1}}G_{k+1}(\beta_{k}, i_{k+1}, \beta_{k+1})\cdots G_{d}(\beta_{d-1}, i_{d}),
\]
and
\[
  P_{\G, <k}(i_{[k-1]}, \beta_{k-1}) = \sum_{\beta_{1}, \ldots, \beta_{k-2}}G_{1}(i_{1}, \beta_{1})\cdots G_{k-1}(\beta_{k-2}, i_{k-1}, \beta_{k-1}).
\]

The tensors are defined so that the following linear equation holds:
\[
  P_{\G}(i_1, \ldots, i_d) = \sum_{\beta_{k-1}, \beta_{k}}P_{\G, <k}(i_{[k-1]}, \beta_{k-1})G_{k}(\beta_{k-1}, i_k, \beta_{k})P_{\G, >k}(\beta_{k}, i_{[d] - [k]}).
\]

We then define two matrices \(M_{>k}\) and \(M_{<k}\) as follows:
\[
  M_{>k}(\beta_{k}, \beta_{k}') = \sum_{i_{[d] - [k]}}P_{\G, >k}(\beta_{k}, i_{[d] - [k]})P_{\G, >k}(\beta_{k}', i_{[d] - [k]}),
\]
and
\[
  M_{<k}(\beta_{k-1}, \beta_{k-1}') = \sum_{i_{[k-1]}}P_{\G, <k}(i_{[k-1]}, \beta_{k-1})P_{\G, <k}(i_{[k-1]}, \beta_{k-1}').
\]
We define \(M_{k} = M_{<k} \otimes M_{>k}\).
One can see that \(M_{k}\) does not depend on \(G_{k}\).
This construction allows one to calculate \(\nabla_{G_k}\left<P_{\G}, P_{\G}\right>\) and \(\nabla^2_{G_k}\left<P_{\G}, P_{\G}\right>\).
By construction, one has
\[
  \begin{aligned}
    \left<P_{\G}, P_{\G}\right> & = \sum_{i_{[d]}}
    P_{\G}(i_{[d]})P_{\G}(i_{[d]})                 \\  & = \sum_{i_{k},\beta_{k-1}, \beta_{k-1}', \beta_{k}, \beta_{k}'}G_{k}(\beta_{k-1}, i_k, \beta_{k})M_{k}((\beta_{k-1}, \beta_{k}), (\beta_{k-1}', \beta_{k}'))
       G_{k}(\beta_{k-1}', i_k, \beta_{k}').
  \end{aligned}
\]

Therefore, one can obtain the gradient term \(\nabla_{G_k}\left<P_{\G}, P_{\G}\right> \colon [\a_{k-1}] \times [n] \times [\a_{k}] \to \R\) as
\[
  \begin{aligned}
    \nabla_{G_k}\left<P_{\G}, P_{\G}\right>(\beta_{k-1}, i_k, \beta_{k})  = 2\sum_{\beta_{k-1}', \beta_{k}'}
    M_{k}((\beta_{k-1}, \beta_{k}), (\beta_{k-1}', \beta_{k}'))
    G_{k}(\beta_{k-1}', i_k, \beta_{k}').
  \end{aligned}
\]
Similarly, one can obtain the hessian term \(\nabla^2_{G_k}\left<P_{\G}, P_{\G}\right>\colon [\a_{k-1}]^2 \times [n]^2 \times [\a_{k}]^2 \to \R\) as
\[
  \nabla^2_{G_k}\left<P_{\G}, P_{\G}\right> = M_{<k} \otimes I_{n} \otimes M_{>k}.
\]

It remains to calculate the gradient term with respect to \(\left<P_{\G}, \tilde{P}\right>\).
Likewise, we define \(\tilde{P}_{>k}\colon [r_{k}]\times [n]^{d-k} \to \R \), \(\tilde{P}_{<k} \colon [n]^{k-1} \times [r_{k-1}] \to \R\) by the following equations:
\[
  \tilde{P}_{>k}(\alpha_{k}, i_{[d] - [k]}) = \sum_{\alpha_{k+1}, \ldots, \alpha_{d-1}}F_{k+1}(\alpha_{k}, i_{k+1}, \alpha_{k+1})\cdots F_{d}(\alpha_{d-1}, i_{d}),
\]
and
\[
  \tilde{P}_{<k}(i_{[k-1]}, \alpha_{k-1}) = \sum_{\alpha_{1}, \ldots, \alpha_{k-2}}F_{1}(i_{1}, \alpha_{1})\cdots F_{k-1}(\alpha_{k-2}, i_{k-1}, \alpha_{k-1}).
\]
Thus one obtains
\[
  \tilde{P}(i_1, \ldots, i_d) = \sum_{\alpha_{k-1}, \alpha_{k}}\tilde{P}_{<k}(i_{[k-1]}, \alpha_{k-1})F_{k}(\alpha_{k-1}, i_k, \alpha_{k})\tilde{P}_{>k}(\alpha_{k}, i_{[d] - [k]}).
\]

We define two matrices \(L_{>k}\) and \(L_{<k}\) as follows:
\[
  L_{>k}(\beta_{k}, \alpha_{k}) = \sum_{i_{[d] - [k]}}P_{\G, >k}(\beta_{k}, i_{[d] - [k]})\tilde{P}_{>k}(\alpha_{k}, i_{[d] - [k]}),
\]
and
\[
  L_{<k}(\beta_{k-1}, \alpha_{k-1}) = \sum_{i_{[k-1]}}P_{\G, <k}(i_{[k-1]}, \beta_{k-1})\tilde{P}_{<k}(i_{[k-1]}, \alpha_{k-1}).
\]

We define \(L_{k} = L_{<k} \otimes L_{>k}\).
By construction, one has
\[
  \begin{aligned}
    \left<P_{\G}, \tilde{P}\right> & = \sum_{i_{[d]}}
    P_{\G}(i_{[d]})\tilde{P}(i_{[d]})                 \\  & = \sum_{i_{k},\beta_{k-1}, \alpha_{k-1}, \beta_{k}, \alpha_{k}}G_{k}(\beta_{k-1}, i_k, \beta_{k})L_{k}((\beta_{k-1}, \beta_{k}), (\alpha_{k-1}, \alpha_{k}))
       F_{k}(\alpha_{k-1}, i_k, \alpha_{k}).
  \end{aligned}
\]
Therefore, the gradient term \(\nabla_{G_k}\left<P_{\G}, \tilde{P}\right> \colon [\a_{k-1}] \times [n] \times [\a_{k}] \to \R\) has the following formula:
\[
  \begin{aligned}
    \nabla_{G_k}\left<P_{\G}, \tilde{P}\right>(\beta_{k-1}, i_k, \beta_{k}) = \sum_{\alpha_{k-1}, \alpha_{k}}
    L_{k}((\beta_{k-1}, \beta_{k}), (\alpha_{k-1}, \alpha_{k}))
    F_{k}(\alpha_{k-1}, i_k, \alpha_{k}).
  \end{aligned}
\]

From the derived formulas, we show that each loop over \(k \in [d]\) in \Cref{alg: alternating minimization} has a cost of \(O(d)\).
The linear complexity is achieved by reusing intermediate results from tensor contractions.
For simplicity, we consider a forward sweep where one iterates \(j\) from \(1\) to \(d\).
For \Cref{alg: alternating minimization}, we have shown that performing an update on \(G_{j}\) requires \((M_{>j}, M_{<j})\) and \((L_{>j}, L_{<j})\).
Before any of the \(G_{j}\) component is updated, we pre-compute and store \((M_{>j}, M_{<j})\) and \((L_{>j}, L_{<j})\) for all \(j \in [d]\).
The pre-computation has an \(O(d)\) cost.
Then, one first updates \(G_{1}\).
Subsequently, one can use the updated \(G_{1}\) to recompute \(M_{<2}, L_{<2}\), which allows one to update \(G_{2}\), as \(M_{>2}, L_{>2}\) are still up-to-date.
In general, after updating \(G_{k}\), one can use the updated \(G_{k}\) to recompute \(M_{<(k+1)}, L_{<(k+1)}\).
One can see that \(M_{>(k+1)}\) and \(L_{>(k+1)}\) are up-to-date, and so one can update \(G_{k+1}\) and recompute \(M_{<(k+2)}, L_{<(k+2)}\).
This procedure continues until all tensor components \(G_1, \ldots, G_d\) are updated.
The cost to recompute \(M_{<(k+1)}, L_{<(k+1)}\) requires a tensor contraction involving \(M_{<k}, L_{<k}, F_{k}, G_{k}\) and is of \(O(1)\) cost.
Therefore, for \Cref{alg: alternating minimization}, each iteration over \(k \in [d]\) has a cost of \(O(d)\).

\subsection{Acceleration strategies in NTT fitting}\label{sec: NTT fitting extra}
This subsection discusses numerical techniques that accelerate the NTT fitting procedure in \Cref{alg: alternating minimization}.

\paragraph{Parallel parameter update}
We show that \(\nabla_{G_k}^2 \f\) has a block-diagonal structure.
As a consequence, obtaining the search direction \(\delta G_{k}=-(\nabla^{2}_{G_k} \f)^{-1}(\nabla_{G_k} \f)\) in \Cref{alg: alternating minimization} is of cost \(O(n)\) rather than the naive \(O(n^3)\) complexity.
We consider the general case where \(k \not \in \{1, d\}\).
We use \(\mathrm{vec}(\cdot)\) to denote the operation of flattening a tensor to a vector.
For \(j = 1, \ldots, n\), we define a vector-valued variable \(v_j \in \R^{\a_{k-1}\a_{k}}\) according to
\[
  v_j = \mathrm{vec}\left([G_k(\beta_{k-1}, j, \beta_{k})]_{\beta_{k-1} \in [\a_{k-1}], \beta_{k} \in [\a_{k}]}\right).
\]
In other words, \(v_j\) is the flattening of the slice of \(G_{k}\) obtained by fixing the second index to be \(i_k = j\).
To prove the \(O(n)\) complexity in the Newton step, it suffices to prove that \(\nabla_{v_{j}}\nabla_{v_{j'}}\f = 0\) whenever \(j \not = j'\).
Proving the statement shows that performing a single-node update on \(G_k\) reduces to performing an update on \(v_{j}\) for \(j = 1, \ldots, n\).
Moreover, the statement shows that the update on \(v_j\) can be done in parallel.

We give a proof for the desired statement.
One sees that \(\f_{k'}\) does not depend on \(G_{k}\) unless \(k = k'\).
Therefore, one has \(\nabla^2_{G_{k}}\f = \nabla^2_{G_{k}}\f_{0} + \mu_k\nabla^2_{G_{k}}\f_{k}\).
The function \(\f_k\) is a sum of univariate functions.
Therefore, \(\nabla_{\mathrm{vec}(G_k)}^2\f_k\) is a diagonal matrix and in particular \(\nabla_{v_{j}}\nabla_{v_{j'}}\f_k = 0\) unless \(j = j'\).
Thus, it remains to analyze \(\f_0\).
The only non-linear term in \(\f_{0}\) comes from \(\left<P_{\G}, P_{\G}\right>\).
By the derived formula in \Cref{sec: implementation details}, we see that there exist two matrices \(M_{>k}, M_{<k}\) such that \(
\left<P_{\G}, P_{\G}\right> = \sum_{j =1 }^{n} v^{\top}_{j} \left(M_{<k} \otimes M_{>k}\right)v_{j}\).
Therefore, for \(j \not = j'\), one has \(\nabla_{v_j}\nabla_{v_{j'}}\f = 0\) as is desired.

\paragraph{Newton step acceleration}
In practice, it suffices to obtain \(\delta G_{k}=-(\nabla^{2}_{G_k} \f)^{-1}(\nabla_{G_k} \f)\) approximately during the Newton step in \Cref{alg: alternating minimization}.
Therefore, we can further reduce the complexity of the Newton step by using the conjugate gradient (CG) method \cite{golub2013matrix}.
Utilizing the aforementioned parallel update strategy, one can divide \(G_k\) into \(n\) variables \(v_1, \ldots, v_n\), and a Newton step on \(G_k\) reduces to performing a Newton step on each variable in \((v_j)_{j = 1}^{n}\) in parallel.
For the general case where \(k \not \in \{1, d\}\), updating \(v_j\) with the Newton step is equivalent to solving the linear equation
\begin{equation}\label{eqn: CG system}
  \left(M_{<k} \otimes M_{>k} + \mu_k \mathrm{diag}(1 \oslash (v_j \odot v_j))\right)(\delta v_j) = - \nabla_{v_j} \f.
\end{equation}

We analyze the complexity of solving for \Cref{eqn: CG system}.
Let \(\a_{\mathrm{max}} = \max_{k \in [d-1]}\a_{k}\) be the maximal internal rank of \(P_{\G}\).
One can see that the solution \(\delta v_j\) is of dimension \(\a_{k-1}\a_k\) and so solving for \(\delta v_j\) in \Cref{eqn: CG system} naively has an \(O(\a_{\mathrm{max}}^6)\) cost, which is quite slow when \(\a_{\mathrm{max}}\) is large.
However, the coefficient matrix in \Cref{eqn: CG system} is special and its matrix-vector multiplication only has an \(O(\a_{\mathrm{max}}^3)\) cost.
As the goal is to approximately solve \Cref{eqn: CG system}, we terminate the CG algorithm after \(N_{\mathrm{iter}}\) steps, which leads to a complexity of \(O( N_{\mathrm{iter}} \a_{\mathrm{max}}^3)\).
When \(\mu_k\) is moderately big, one can see that the coefficient matrix in \Cref{eqn: CG system} is well-conditioned due to the diagonal term.
Lastly, we observe empirically that taking the diagonal term \(D = \mu_k \mathrm{diag}(1 \oslash (v_j \odot v_j))\) as the preconditioner gives a substantial speed-up to solving \Cref{eqn: CG system}.
Therefore, the numerical experiments in \Cref{sec: numerics} also include the performance for using the preconditioned conjugate gradient (PCG) method with \(D\) as the preconditioner.

\paragraph{Adaptive barrier update}
We introduce an adaptive barrier update schedule to dynamically choose \(\mu_k\). The goal is to reduce the log barrier coefficient \(\mu_k\) as rapidly as possible while ensuring that the current variable \(\G\) is approximately stationary in the regularized loss function \(\f(\G)\).
We apply a simple heuristic modified from \cite{nocedal2009adaptive}, which is commonly used for nonlinear interior methods.
We assume without loss of generality that \(k \not \in \{1, d\}\).
Using the notation from \Cref{alg: alternating minimization}, the adaptive regularization schedule is obtained by taking
\begin{equation}\label{eqn: mu schedule}
  \mu_{k}^{i+1} = \min\left(\mu_{k}^{i}, \tilde{\mu}_{k}^{i}\right), \quad \tilde{\mu}_{k}^{i} = \frac{\sigma}{\a_{k-1}n \a_k} \sum_{\beta_{k-1} = 1}^{\a_{k-1}}\sum_{\beta_{k} = 1}^{\a_{k}}\sum_{i_k = 1}^{n}G_{k}(\beta_{k-1}, i_k, \beta_{k}) \lvert \nabla_{G_{k}}\f_0(\beta_{k-1}, i_k, \beta_{k}))\rvert,
\end{equation}
where \(\sigma > 0\) is a centering parameter.
If one removes the absolute value in \Cref{eqn: mu schedule} for the formula for \(\tilde{\mu}_{k}^{i}\), one exactly recovers the heuristic barrier rule from \cite{nocedal2009adaptive}.
The value for \(\sigma\) can also be chosen through a heuristic, but this work considers a fixed value of \(\sigma\) for simplicity.
One can see that the formula in \Cref{eqn: mu schedule} leads to an adaptive and monotonically decreasing schedule.
When \(\f\) is stationary in terms of \(G_{k}\), the term \(\nabla_{G_k} \f_0\) is entry-wise positive.
Therefore, taking the absolute value in \Cref{eqn: mu schedule} is mainly used to increase robustness.
Lastly, the corresponding equation for \(k = 1\) and \(k = d\) can be obtained by respectively omitting the \(\beta_{k-1}\) and \(\beta_{k}\) index in \Cref{eqn: mu schedule}.

\paragraph{Warm initialization}
As is typical for second-order optimization, we employ warm initialization to reduce the number of Newton steps required to achieve convergence.
To achieve this, we employ a multiplicative update algorithm, as considered in \cite{shcherbakova2019nonnegative}.
We summarize the procedure in \Cref{alg: lee-seung}.
The symbols \(\odot\) and \(\oslash\) in \Cref{alg: lee-seung} respectively denote the entry-wise multiplication and division, and the maximization is taken entry-wise.
As in the Lee-Seung algorithm, the multiplicative update ensures that each tensor component stays non-negative.

\begin{algorithm}
  \caption{Multiplicative update algorithm for NTT fitting (from \cite{shcherbakova2019nonnegative}).}
  \label{alg: lee-seung}
  \begin{algorithmic}[1]
    \REQUIRE Reference tensor train ansatz \(\tilde{P}\).
    \REQUIRE Total iteration number \(N_{\mathrm{iter}}\).
    \REQUIRE Small positive number \(\lambda\) (e.g. \(\lambda = 10^{-9}\)).
    \REQUIRE Initial non-negative tensor train ansatz \(P_{\G}\) with tensor component \(\G = (G_k)_{k = 1}^{d}\)
    \FOR{\(i = 1, \ldots, N_{\mathrm{iter}}\)}
    \FOR{\(k = 1,\ldots, d\) \textbf{and then} \(k = d,\ldots,1\)}
    \STATE  \( U = \max(\nabla_{G_k} \left<\tilde{P}, P_{\G}\right>, \lambda) \)
    \STATE  \( V = \frac{1}{2}\nabla_{G_k} \left<P_{\G},  P_{\G}\right> \)
    \STATE \(G_k \gets G_k \odot U \oslash V\).
    \ENDFOR
    \ENDFOR
    \STATE Output the tensor train \(P_{\G}\) with tensor component \((G_{k})_{k = 1}^{d}\).
  \end{algorithmic}
\end{algorithm}

In our case, we use a random initialization on \(\G\) and run a few iterations of \Cref{alg: lee-seung} to obtain an initial parameter \(\G\).
Doing so ensures that \(P_{\G} \approx \tilde{P}\).
While the approximation error might still be large at this stage, the multiplicative update ensures that \(P_{\G}\) is of the same scale as \(\tilde{P}\) in terms of the squared Frobenius norm.
Then, we balance the magnitude of \(\G = (G_{k})_{k = 1}^{d}\) by replacing \((G_{k})_{k = 1}^{d}\) with \( (c_{k}G_{k})_{k = 1}^{d}\), where the rescaling parameter \(c_k\) is defined by the following equation \[\log(c_{k}) = -\log(\lVert G_{k} \rVert_{F}) + \frac{1}{d}\sum_{j = 1}^{d}\log(\lVert G_{j} \rVert_{F}).
\]
We see that \(\sum_{j=1}^{d}\log(c_{j}) = 0\) and so \(P_{\G}\) does not change by the rescaling.
The construction of \(c_k\) ensures \(\log(\lVert c_{k} G_{k} \rVert_{F}) = \frac{1}{d}\sum_{j = 1}^{d}\log(\lVert G_{j} \rVert_{F})\), and so the relative scale of each tensor component is identical after the rescaling.

To demonstrate the performance of \Cref{alg: alternating minimization} after the proposed acceleration strategy, we shall use \Cref{alg: lee-seung} as the benchmark in \Cref{sec: numerics} and compare the efficiency in NTT fitting.
The update step in \Cref{alg: lee-seung} can be derived from the Lee-Seung algorithm in \cite{lee2000algorithms}.
Essentially, the single node update in \Cref{alg: lee-seung} is equivalent to taking one step of the Lee-Seung algorithm in \cite{lee2000algorithms} by taking the squared Frobenius norm as the loss.
Therefore, \Cref{alg: lee-seung} does not consider any regularization and is an algorithm that directly minimizes \(\f_0\).
To better compare the numerical efficiency with \Cref{alg: alternating minimization}, we choose a symmetrized node update schedule in the version presented in \Cref{alg: lee-seung}.
In the original work in \cite{shcherbakova2019nonnegative}, the node schedule iterates \(k\) in increasing order from \(1\) to \(d\).

\section{Self-concordance analysis}\label{sec: self-concordance}
This section analyzes the \emph{single-component} optimization step that \Cref{alg: alternating minimization} performs inside the NTT fitting loop.
We show that the inner Newton step enjoys rapid quadratic convergence if one takes multiple Newton steps during a single component update.
Due to the analysis, one can see that the bottleneck in convergence in fitting error largely lies in the outer loop.
In practice, it suffices to take only a few Newton steps to guarantee rapid convergence. The version of NTT shown in \Cref{alg: alternating minimization} thus only performs one Newton step per node visited. The algorithm could also be modified to allow taking multiple Newton steps.

For a fixed outer iteration $i$ and node index $k$, the single-component subproblem is
\begin{align}
  \label{eq:single_opt}
  \min_{G_k}\;
  \f_\mu(G_k)
  \;=\;
  \f_0(\G)+\mu\,\f_k(G_k),
\end{align}
where \(\mu = \mu_k\), the main loss term is
$\f_0(\G)=\lVert P_\G-\widetilde P\rVert_F^{2}$,
and the barrier function is the entry-wise log barrier defined by
\begin{align*}
  \f_k(G_k) \;=\;
  -\!\!\sum_{\alpha_{k-1},\,i_k,\,\alpha_k}
  \log G_k(\alpha_{k-1},i_k,\alpha_k).
\end{align*}
Throughout this section, we treat the tensor component $G_k$ as a flattened vector of dimension $\nu_k=n\,a_{k-1}a_k$.

\subsection{Background on self-concordance}

\begin{definition}[Self-concordant function]
  \label{def:self_concordant}
  A convex function $F \colon \R^\nu \to \R$ is self-concordant if
  \begin{align*}
    \bigl|D^{3}
    F(x)[h,h,h]\bigr|
    \;\le\;
    2\,\bigl(D^{2}F(x)[h,h]\bigr)^{3/2},
    \qquad
    \forall\,x,\;h \in \R^\nu .
  \end{align*}
\end{definition}
Importantly, the univariate log barrier is self-concordant.
We provide a short proof below.

\paragraph{Log barrier is self-concordant}
For the univariate function $\phi(z)=-\log z$, one has
$\phi'''(z)=-2/z^{3}$ and $\phi''(z)=1/z^{2}$.
Hence, $\lvert\phi'''\rvert\le 2(\phi'')^{3/2}$, and so $\phi$ is self-concordant.
Sums preserve self-concordance (see discussion of self-concordant calculus in \cite[§9.6.2]{boyd2004convex}), so
\begin{align*}
  \Phi(x)= -\sum_{i=1}^{\nu_k}\log x_i
\end{align*}
is self-concordant on $\mathbb R_{>0}^{\nu_k}$.
As a result, the barrier $\f_k(G_k)$ in \Cref{eq:single_opt} is self-concordant on the positive orthant $\mathbb R_{>0}^{\nu_k}$.
Further, adding the quadratic term $\f_0(\G)$ does not affect self-concordance, so the full objective $\f_\mu(G_k)$ is also self-concordant on $\mathbb R_{>0}^{\nu_k}$ for all node indices $k$.

\subsection{Convergence guarantee during single-component updates}
Before we present our main theoretical results, we provide the details of the line search used in \Cref{alg: alternating minimization}, as well as introduce some notation related to the Newton step.

\paragraph{Backtracking line search}
We use the standard backtracking line search (Algorithm 9.2 in \cite{boyd2004convex}) in \Cref{alg: alternating minimization} as shown in \Cref{alg:backtracking}.

\begin{algorithm}[H]
  \caption{Backtracking line search for Newton step}
  \label{alg:backtracking}
  \begin{algorithmic}[1]
    \REQUIRE Newton direction $\Delta G_k$, $\alpha\in(0,0.5)$, $\beta\in(0,1)$
    \STATE $t\gets1$
    \WHILE{$\f_{\mu}(G_k+t\Delta G_k) > \f_{\mu}(G_k) + \alpha t\,\langle\nabla_{G_k}\f_{\mu},\Delta G_k\rangle$}
    \STATE $t\gets\beta t$
    \ENDWHILE
    \RETURN $t$
  \end{algorithmic}
\end{algorithm}

\Cref{alg:backtracking} multiplies $\Delta G_k$ by
$t \in(0,1]$, and the scaling constant \(t\) shrinks until the proposed point $G_k+t\Delta G_k$ is feasible and
the Armijo inequality
\[
  \f_\mu(G_k+t\Delta)\le
  \f_\mu(G_k)+\alpha\,t\,\langle\nabla \f_\mu,\Delta\rangle
\]
holds, guaranteeing descent.
With $\alpha\in(0,0.5)$ and $\beta\in(0,1)$ the loop always terminates; see \cite[§9.2]{boyd2004convex}.

\paragraph{Newton direction and decrement}
In the context of the Newton step, we introduce two important and useful quantities, which are the Newton direction and the Newton decrement.
\begin{definition}[Newton direction]
  \label{def:newton_direction}
  Let $G_k$ be a feasible point for \Cref{eq:single_opt}.
  The \emph{Newton direction} $\Delta G_k$ is defined as
  \begin{align*}
    \Delta G_k
     & = -\bigl[\nabla_{G_k}^{2}\f_\mu\bigr]^{-1}\nabla_{G_k}\f_\mu,
  \end{align*}
  where $\nabla_{G_k}^{2}\f_\mu$ is the Hessian of $\f_\mu$ at $G_k$ and
  $\nabla_{G_k}\f_\mu$ is the gradient.
\end{definition}

\begin{definition}[Newton decrement]
  \label{def:newton_decrement}
  Given a feasible point $G_k$ for \Cref{eq:single_opt}, the
  \emph{Newton decrement} $\lambda(G_k)$ is defined as
  \begin{align*}
    \lambda(G_k)
     & = \left(\nabla_{G_k}\f_\mu^{\mathsf T}\,
    \bigl[\nabla_{G_k}^{2}\f_\mu\bigr]^{-1}\,
    \nabla_{G_k}\f_\mu\right)^{1/2}.
  \end{align*}
\end{definition}

Intuitively, $\Delta G_k$ is the optimal descent step assuming a quadratic model, and
$\lambda(G_k)$ measures how far the point $G_k$ is from the minimizer of the quadratic model, in the norm induced by the local Hessian $\nabla_{G_k}^{2}\f_\mu$.

We now present the main theoretical results of this section, which are broadly analogous to the standard results on Newton steps for self-concordant barriers, but adapted to the context of the NTT single component optimization.

\begin{theorem}[Guarantees for the backtracked Newton step in NTT component optimization]
  \label{thm:newton}
  Let $G_k$ be feasible for \Cref{eq:single_opt}.
  Let $\Delta=\Delta G_k$ be the Newton direction. Let $\lambda=\lambda(G_k)$ and let
  $t$ be the step size returned by
  \Cref{alg:backtracking} with parameter \(\alpha, \beta\).
  Then
  \begin{enumerate}
    \item \textbf{Feasibility}: $G_k^{+}:=G_k+t\Delta$ is strictly positive entry-wise.
    \item \textbf{Descent}: $\f_\mu(G_k^{+})\le \f_\mu(G_k)-\alpha\,t\,\lambda^{2}$.
    \item \textbf{Quadratic convergence}:
          If $\lambda\le\frac14$, then $t=1$ is accepted in the backtracking line search, and we have quadratic convergence:
          \begin{align*}
            \lambda(G_k^{+}) & \le 2\lambda(G_k)^2,                  \\
            \f_\mu(G_k^{+})   & \le \f_\mu(G_k)-\tfrac12\,\lambda^{2}.
          \end{align*}
  \end{enumerate}
\end{theorem}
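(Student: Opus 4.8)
The plan is to reduce the statement to the standard self-concordant Newton-method analysis (as in \cite[§9.6]{boyd2004convex}), since the previous subsection already shows that $\f_\mu$, viewed as a function of the flattened $G_k$ with all other components frozen, is self-concordant on the open positive orthant $\R_{>0}^{\nu_k}$, and its Hessian $\nabla^2_{G_k}\f_\mu = \nabla^2_{G_k}\f_0 + \mu\,\mathrm{diag}\bigl(1 \oslash (G_k \odot G_k)\bigr)$ is positive definite everywhere in the domain, so the Newton direction $\Delta$ and decrement $\lambda$ are well-defined. The only genuinely NTT-specific point is that the feasible region is the positive orthant and that the log barrier in $\f_\mu$ blows up at its boundary; everything else is the textbook argument applied to $\f_\mu$.

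\textbf{Feasibility and descent.} First I would record the identity $\langle\nabla_{G_k}\f_\mu,\Delta\rangle = -\nabla_{G_k}\f_\mu^{\mathsf T}[\nabla^2_{G_k}\f_\mu]^{-1}\nabla_{G_k}\f_\mu = -\lambda^2$, so $\Delta$ is a strict descent direction when $\nabla_{G_k}\f_\mu \ne 0$ (and if $\nabla_{G_k}\f_\mu = 0$ the claims are trivial with $t=1$). Since $\f_\mu(G_k + t\Delta) = +\infty$ as soon as an entry of $G_k + t\Delta$ is non-positive, the while-loop test in \Cref{alg:backtracking} is never satisfied at an infeasible point; because $G_k$ is strictly positive, all sufficiently small $t$ are feasible, and continuity of $\f_\mu$ on the orthant together with $\langle\nabla_{G_k}\f_\mu,\Delta\rangle<0$ guarantees the loop terminates at a $t$ with $G_k^{+}:=G_k+t\Delta$ strictly positive, which is Claim 1. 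Claim 2 then follows immediately, because the accepted $t$ obeys the Armijo inequality $\f_\mu(G_k^{+}) \le \f_\mu(G_k) + \alpha t\langle\nabla_{G_k}\f_\mu,\Delta\rangle = \f_\mu(G_k) - \alpha t\lambda^2$.

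\textbf{Quadratic convergence.} Set $g(t) := \f_\mu(G_k + t\Delta)$, so $g'(0) = -\lambda^2$ and $g''(0) = \Delta^{\mathsf T}\nabla^2_{G_k}\f_\mu\,\Delta = \lambda^2$. Self-concordance of $\f_\mu$ gives, for $t\lambda < 1$, the two-sided Hessian bound $(1-t\lambda)^2\,\nabla^2_{G_k}\f_\mu(G_k) \preceq \nabla^2_{G_k}\f_\mu(G_k+t\Delta) \preceq (1-t\lambda)^{-2}\,\nabla^2_{G_k}\f_\mu(G_k)$; the upper side yields $g''(t) \le \lambda^2/(1-t\lambda)^2$, and integrating twice gives $g(t) \le g(0) - t\lambda^2 - t\lambda - \log(1-t\lambda)$ for $t\lambda<1$. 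Evaluating this at $t=1$ under the hypothesis $\lambda \le \tfrac14$ shows the guaranteed decrease exceeds $\alpha\lambda^2$, so the unit step clears the Armijo test and $t=1$ is accepted, which also produces the claimed function-value decrease. For the decrement contraction I would combine the lower Hessian bound with the self-concordant gradient estimate along the Newton ray to obtain the standard inequality $\lambda(G_k^{+}) \le \lambda^2/(1-\lambda)^2$, and then $\lambda\le\tfrac14$ gives $\lambda(G_k^{+}) \le \tfrac{16}{9}\lambda^2 \le 2\lambda^2$.

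\textbf{Main obstacle.} The delicate part is the constant bookkeeping in the last paragraph: making the self-concordant function-value and gradient estimates sharp enough that the single threshold $\lambda\le\tfrac14$ simultaneously forces $t=1$ and delivers the clean constants $2$ and $\tfrac12$ (the crude double-integration bound at $t=1$ gives a decrease only slightly below $\tfrac12\lambda^2$, so one must be careful here). Everything else — feasibility, descent, and well-posedness of the Newton step — is routine once self-concordance of $\f_\mu$ has been established.
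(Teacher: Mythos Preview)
Your plan is the same as the paper's: apply the standard self-concordant Newton analysis to $\f_\mu$ as a function of the single flattened component $G_k$. Feasibility, descent, and the decrement contraction $\lambda(G_k^{+})\le\lambda^{2}/(1-\lambda)^{2}\le 2\lambda^{2}$ are handled exactly as in the paper (your feasibility argument via the barrier blowing up at the boundary is in fact a bit simpler than the paper's Hessian-sandwich version).

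The genuine gap is precisely what you flag as the ``main obstacle,'' and it is not just cosmetic. Your upper bound $g''(t)\le\lambda^{2}/(1-t\lambda)^{2}$ integrates twice to $g(1)\le g(0)-\lambda^{2}-\lambda-\log(1-\lambda)$, and at $\lambda=\tfrac14$ this guarantees a decrease of only $\lambda^{2}+\lambda+\log(1-\lambda)\approx 0.397\,\lambda^{2}$. That is strictly less than $\tfrac12\lambda^{2}$, and it also fails the Armijo test whenever $\alpha\in(0.397,0.5)$; so neither ``$t=1$ is accepted for every $\alpha<\tfrac12$'' nor the claimed $\tfrac12\lambda^{2}$ function-value decrease follows from your bound, and your sentence ``the guaranteed decrease exceeds $\alpha\lambda^{2}$'' is not correct as written. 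The paper takes a different route at this step: it asserts the second-derivative bound $\varphi''(t)\le\lambda^{2}/(1+t\lambda)^{2}$, carries out the integral-remainder computation to obtain $\f_\mu(G_k+\Delta)-\f_\mu(G_k)\le -\lambda^{2}+\lambda-\log(1+\lambda)$, and then uses the elementary inequality $\lambda-\log(1+\lambda)\le\tfrac12\lambda^{2}$ to get the $\tfrac12$ constant directly (and hence Armijo acceptance for all $\alpha<\tfrac12$). You should examine that $\varphi''$ bound with care, since it has the opposite sign in the denominator from the standard Hessian-sandwich upper bound you wrote down.
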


\begin{proof}
  \textit{Step 1 – Feasibility}\par
  We use $\lVert V\rVert_{G_k}^{2}=V^{\mathsf T}\nabla^{2}\f_\mu(G_k)V$ to denote the local norm induced by the Hessian at the current point $G_k$.
  A standard self-concordance result is the \emph{Hessian sandwich inequality} \cite{nesterov1994interior},
  valid whenever $\lVert V\rVert_{G_k}<1$:
  \begin{align*}
    \bigl(1-\lVert V\rVert_{G_k}\bigr)^{2}\,
    \nabla^{2}\f_\mu(G_k)
    \;\preceq\;
    \nabla^{2}\f_\mu(G_k+V)
    \;\preceq\;
    \frac{1}{\bigl(1-\lVert V\rVert_{G_k}\bigr)^{2}}\,
    \nabla^{2}\f_\mu(G_k).
  \end{align*}

  \medskip
  \noindent
  \emph{Quantifying the growth of the norm along the ray.}
  Set
  \begin{align*}
    m(t)
    = V^{\mathsf T}\,\nabla^{2}\f_\mu\bigl(G_k+tV\bigr)\,V,
    \qquad 0\le t<1 .
  \end{align*}
  Applying the right–hand side of the inequality gives
  \begin{align*}
    m(t)
     & \;\le\;
    \frac{1}{\bigl(1-t\lVert V\rVert_{G_k}\bigr)^{2}}
    \;V^{\mathsf T}\,\nabla^{2}\f_\mu(G_k)\,V \\[4pt]
     & =\frac{\lVert V\rVert_{G_k}^{2}}
    {(1-t\lVert V\rVert_{G_k})^{2}}          \\[4pt]
     & \le \frac{\lambda^{2}}
    {(1-t\lambda)^{2}}
    \qquad\text{for } 0\le t<1/\lambda.
  \end{align*}
  Taking the square root, we obtain an explicit bound on the
  \emph{local} norm at the new point:
  \begin{equation}\label{eqn: bound on V in local norm}
    \lVert V\rVert_{G_k+tV}
    \;\le\;
    \frac{\lambda}{1-t\lambda},
    \qquad 0\le t<\frac1\lambda,
  \end{equation}
  where the right-hand side is finite whenever $t\lambda<1$.

  \medskip
  \noindent
  \emph{Applying (F.1) to the Newton direction.}
  For the full Newton step we take $V=\Delta$ and by definition one has
  $\lVert\Delta\rVert_{G_k}=\lambda$.
  The back-tracking loop multiplies $t$ by $\beta$ repeatedly
  until $t\lambda<1$, so the accepted step $t\Delta$ satisfies the
  premise of \Cref{eqn: bound on V in local norm}.
  Consequently every local norm along the segment
  $G_k+\tau t\Delta$ ($0\le\tau\le1$) is finite, which in turn implies
  every coordinate of $G_k+\tau t\Delta$ stays \emph{strictly} positive
  (otherwise the logarithmic barrier would blow up to $+\infty$).
  Therefore
  \begin{align*}
    G_k^{+}=G_k+t\Delta
  \end{align*}
  remains feasible, completing the proof of Step 1.

  \medskip
  \textit{Step 2 – Descent}\par
  The backtracking line search accepts $t$ only if
  $
    \f_\mu(G_k^{+})
    \le \f_\mu(G_k)+\alpha\,t\,\langle\nabla \f_\mu,\Delta\rangle.
  $
  Because $\langle\nabla \f_\mu,\Delta\rangle=-\lambda^{2}$ by definition of the Newton direction,
  we obtain $\f_\mu(G_k^{+})\le \f_\mu(G_k)-\alpha t\lambda^{2}$.

  \medskip
  \textit{Step 3 – Quadratic regime ($\lambda\le\frac14$)}\par
  When $\lambda\le\frac14$, one has $\lVert\Delta\rVert_{G_k}<1$, and so taking the step size $t=1$ is feasible.
  Set $\varphi(t)=\f_\mu(G_k+t\Delta)$ for $t\in[0,1]$.
  Then $\varphi'(0)=-\lambda^{2}$ and, by self-concordance, one has
  $\varphi''(t)\le\lambda^{2}/(1+t\lambda)^{2}$.

  \medskip
  \noindent
  \emph{Integral-remainder version of Taylor’s theorem}:
  \begin{align*}
    \f_\mu(G_k+\Delta)-\f_\mu(G_k)
     & = \varphi(1)-\varphi(0) \\[4pt]
     & = \varphi'(0)
    +\int_{0}^{1}(1-t)\,\varphi''(t)\,dt .
  \end{align*}
  Insert $\varphi'(0)$ and the bound on $\varphi''(t)$:
  \begin{align*}
    \f_\mu(G_k+\Delta)-\f_\mu(G_k)
     & \le
    -\lambda^{2}
    +\lambda^{2}\!
    \int_{0}^{1}
    \frac{1-t}{(1+t\lambda)^{2}}\,dt .
  \end{align*}

  \medskip
  \noindent
  \emph{Evaluate the integral explicitly}.
  Substitute $u=1+t\lambda$ ($du=\lambda\,dt$):
  \begin{align*}
    \int_{0}^{1}\frac{1-t}{(1+t\lambda)^{2}}\,dt
     & =\int_{u=1}^{\,1+\lambda}
    \frac{1-\dfrac{u-1}{\lambda}}{u^{2}}\,
    \frac{du}{\lambda}            \\[6pt]
     & =\frac{1}{\lambda^{2}}
    \int_{1}^{1+\lambda}
    \frac{\lambda+1-u}{u^{2}}\,du \\[6pt]
     & =\frac{1}{\lambda^{2}}
    \Bigl[
    (\lambda+1)\underbrace{\int_{1}^{1+\lambda}\!u^{-2}\,du}_{=\,1-\frac{1}{1+\lambda}}
    \;-\;
    \underbrace{\int_{1}^{1+\lambda}\!u^{-1}\,du}_{=\,\log(1+\lambda)}
    \Bigr]                        \\[6pt]
     & =\frac{1}{\lambda^{2}}
    \Bigl[\lambda-\log(1+\lambda)\Bigr].
  \end{align*}

  \medskip
  \noindent
  \emph{Combine the pieces}:
  \begin{align*}
    \f_\mu(G_k+\Delta)-\f_\mu(G_k)
     & \le
    -\lambda^{2}
    +\bigl[\lambda-\log(1+\lambda)\bigr] .
  \end{align*}
  For $0\le\lambda\le\frac14$ we have the elementary bound
  $\lambda-\log(1+\lambda)\le\frac12\lambda^{2}$,
  giving
  \begin{align*}
    \f_\mu(G_k+\Delta)\le \f_\mu(G_k)-\tfrac12\lambda^{2}.
  \end{align*}
  To prove the quadratic convergence claim, we first define the following for brevity,
  \[
    H:=\nabla^2 \f_\mu(G_k),\qquad
    g:=\nabla \f_\mu(G_k),\qquad
    g_+ := \nabla \f_\mu(G_k^+).
  \]
  For $t\in[0,1)$ let $H_t=\nabla^2 \f_\mu(G_k+t\Delta)$.
  The Hessian sandwich takes the form
  \[
    (1-t\lambda)^2H \;\preceq\; H_t \;\preceq\; (1-t\lambda)^{-2}H.
  \]
  With $\displaystyle\delta\coloneqq\int_0^1 H_sH^{-1}\,ds$, the integral-remainder version Taylor’s theorem gives
  \[
    \nabla f(G_k^+) \;=\; g - \delta g \;=\; (I-\delta)g.
  \]

  \paragraph{Bounding $\delta$}
  From the Hessian sandwich,
  \[
    \int_0^1\!
    \!(1-t\lambda)^2\,dt\,I \;\preceq\; \delta \;\preceq\;
    \int_0^1\!\!(1-t\lambda)^{-2}\,dt\,I.
  \]
  Evaluating the two scalar integrals, one has
  \[
    k_1 := \int_0^1 (1-t\lambda)^2\,dt
    = 1-\lambda+\frac{\lambda^{2}}{3},\qquad
    k_2 := \int_0^1 (1-t\lambda)^{-2}\,dt
    = \frac{1}{1-\lambda},
  \]
  so that
  \[
    k_1 I \;\preceq\; \delta \;\preceq\; k_2 I.
  \]
  Hence for $\lambda\in[0,1/4]$ we have
  \[
    \|I-\delta\|_2
    \;\leq \max\{\,1-k_1,\;k_2-1\,\}
    \;=\; \frac{\lambda}{1-\lambda}.
  \]
  Consequently
  \[
    g_+^\top H^{-1} g_+
    \;=\; g^\top(I-\delta)^{\!\top}
    H^{-1}(I-\delta)g
    \;\le\; \frac{\lambda^{4}}{(1-\lambda)^{2}}.
  \]

  \paragraph{Norm change from $H$ to $H_1=\nabla^2 f(G_k^+)$}
  Taking $t=1$ in the Hessian sandwich gives
  \[
    (1-\lambda)^2H \;\preceq\; H_1 \;\preceq\; (1-\lambda)^{-2}H,
    \quad\Longrightarrow\quad
    H_1^{-1} \preceq (1-\lambda)^{-2}H^{-1}.
  \]

  \paragraph{Newton decrement at $G_k^+$}
  Combining the norm change with the bound on $g_+^\top H^{-1} g_+$ yields
  \begin{align*}
    \lambda(G_k^+)^{2}
     & = g_+^\top H_1^{-1} g_+                    \\
     & \le (1-\lambda)^{-2}\, g_+^\top H^{-1} g_+ \\
     & \le \frac{\lambda^{4}}{(1-\lambda)^{4}}.
  \end{align*}
  Taking square roots, we get
  \[
    \;
    \lambda(G_k^+)\;\le\;
    \dfrac{\lambda(G_k)^{2}}{\bigl(1-\lambda(G_k)\bigr)^{2}}.
    \!
  \]
  For $0 \le \lambda \le \tfrac14$, this simplifies to
  \[
    \lambda(G_k^+)\;\le\; 2\,\lambda(G_k)^{2},
  \]
  completing the proof of Step 3.

\end{proof}

\section{Numerical experiments}\label{sec: numerics}

We present the numerical performance of the proposed NTT compression procedure for variational inference and density estimation.
\Cref{sec: VI numerics} presents numerical experiments for the variational inference setting. \Cref{sec: DE numerics} presents the result for the density estimation setting. 

In particular, for the NTT fitting stage, we compare the different acceleration strategies discussed in \Cref{sec: NTT fitting extra} and use the multiplicative update algorithm considered in \cite{shcherbakova2019nonnegative} as the benchmark.
We briefly describe the specific parameter choices and details for each acceleration strategy below.
\begin{enumerate}
  \item \textbf{Fixed barrier schedule.}
        We use the regularization schedule discussed in \Cref{sec: NTT fitting} with an initial value of \(\mu_k = 10^{-3}\), and the value is halved every iteration until a minimal value of \(\mu_k = 10^{-12}\).
  \item \textbf{Adaptive barrier schedule.}
        We use the adaptive regularization schedule discussed in \Cref{sec: NTT fitting extra} with a centering parameter \(\sigma\).
  \item \textbf{Newton Acceleration with CG.}
        We use the conjugate gradient method to approximately solve the Newton step in \Cref{alg: alternating minimization}.
        We limit the maximum number of CG iterations for each step to \(N_{\mathrm{iter}} = 100\).
  \item \textbf{Preconditioned CG Newton.}
        We use the diagonal preconditioner described in \Cref{sec: NTT fitting extra} to accelerate the Newton steps.
        Since the diagonal weight $\mu$ monotonically decreases, we use the preconditioner until a threshold of \(\mu_k = 10^{-8}\), beyond which standard CG is used.
\end{enumerate}

For simple performance comparisons across all experiments for NTT fitting, we use the loss function \(\lVert P - P_{\G}\rVert^2_{F}/\lVert P\rVert^2_{F}\), which is the relative squared Frobenius norm error. We plot the convergence in wall-clock time. All experiments are run on a MacBook Pro with an M1 Pro processor.

\subsection{Variational inference}\label{sec: VI numerics}

\begin{figure}[ht]
  \centering
    \subcaptionbox{Example 1, Ginzburg-Landau model.\label{fig:numerics-gl}}[0.76\linewidth]{
    \includegraphics[width=\linewidth]{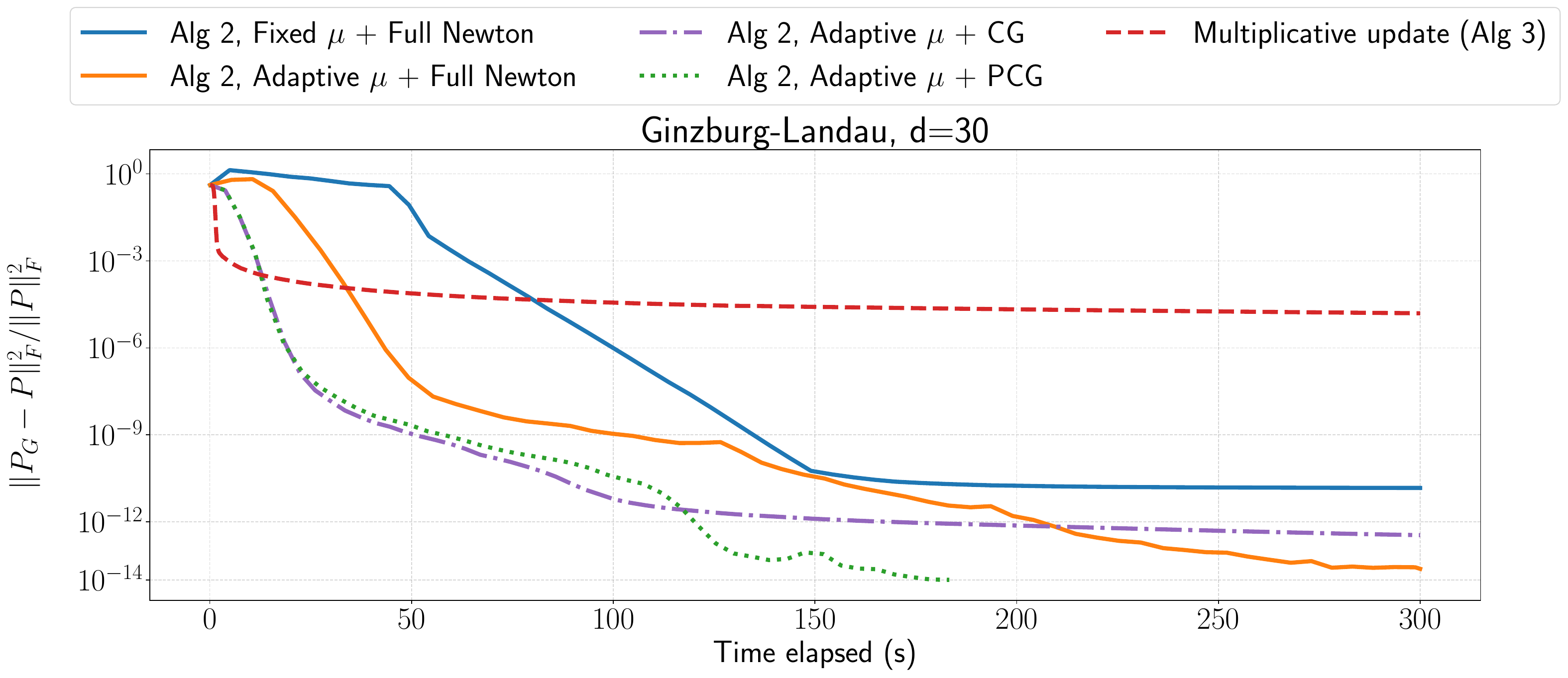}
  }
    \hfill
  \centering
  \subcaptionbox{Example 2, Gibbs kernel tensor.\label{fig:numerics-gibbs}}[0.75\linewidth]{
    \includegraphics[width=\linewidth]{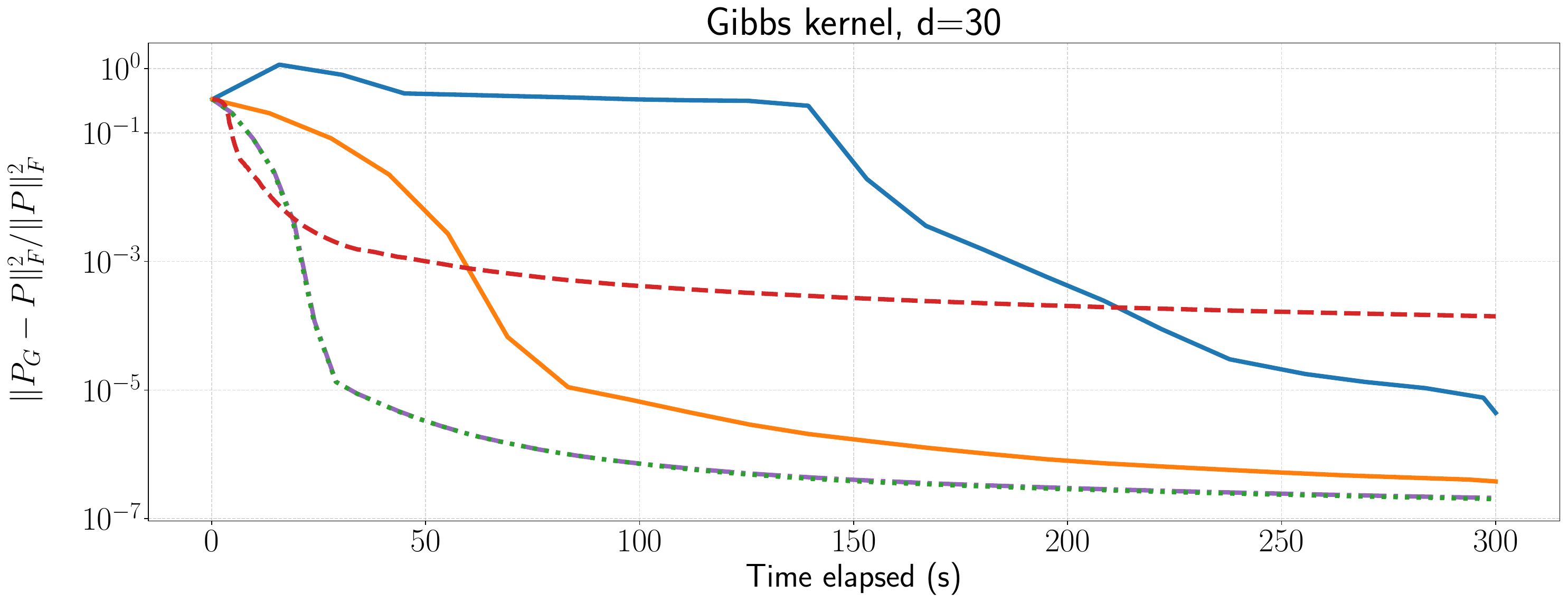}
  }
  \hfill
  \subcaptionbox{Example 3, Heavy-tail model.\label{fig:numerics-cauchy}}[0.75\linewidth]{
    \includegraphics[width=\linewidth]{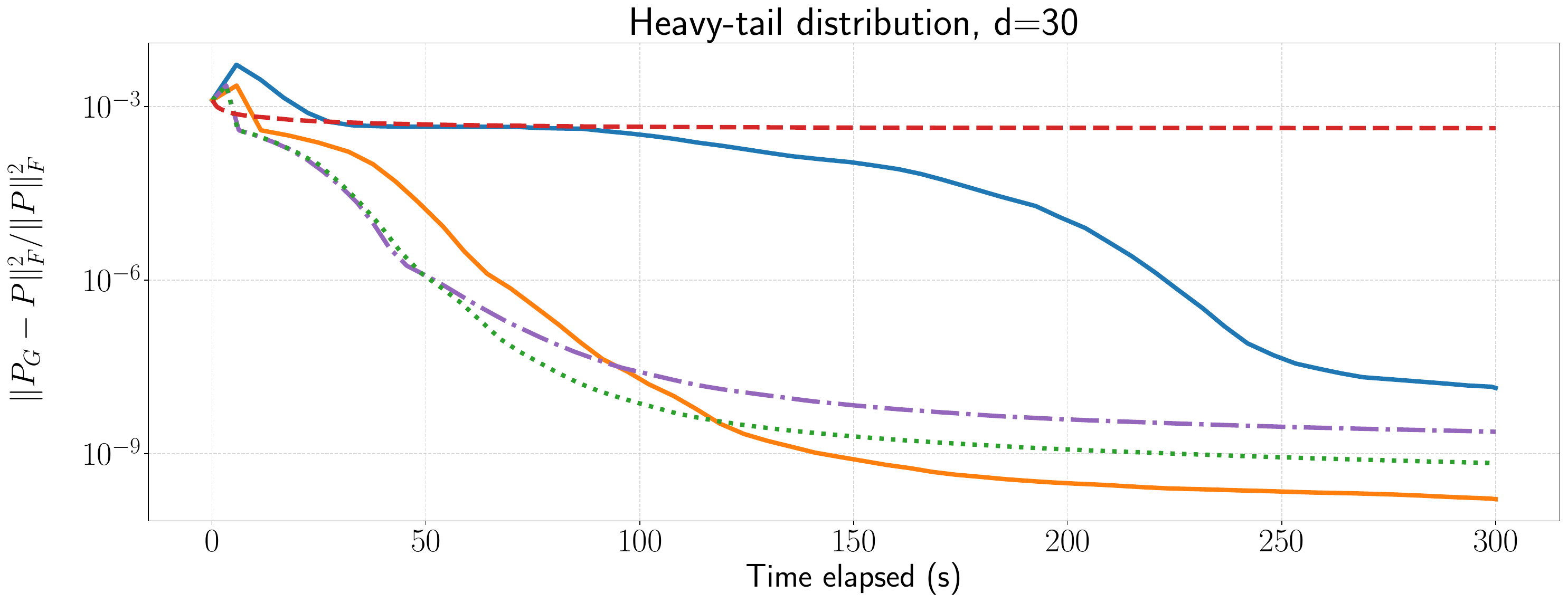}
  }

  \caption{Numerical results for NTT fitting in the variational inference setting. The plot compares the baseline multiplicative update (\Cref{alg: lee-seung}) with our main algorithm (\Cref{alg: alternating minimization}), accelerated using the different strategies described in \Cref{sec: NTT fitting extra}. The label CG and PCG corresponds to conjugate gradient and preconditioned conjugate gradient for the Newton step. For the centering parameter in the adaptive barrier strategy, we choose \(\sigma = 0.2\) for the first two examples, and for Example 3, we choose \(\sigma = 0.01\).
  }
  \label{fig:numerics}
\end{figure}

We test the NTT compression procedure for the variational inference setting. To test the accuracy in fitting the density \(P\), we randomly choose \(10^{5}\) entries from the total index set \([n]^d\), and we compare \(P\) against \(\tilde{P}, P_{\G}\) on those entries. In particular, we report the average relative loss between the evaluations on those entries. As the main contribution of this work is the NTT fitting procedure, our main goal is for \(P_{\G}\) to have a small error.

\paragraph{Example 1: Ginzburg-Landau model}
We let \(n = 50\) and we let \((x_i)_{i=1}^{n}\) be a uniform grid point on \([-2, 2]\) of length \(n\).
We first consider the unnormalized distribution tensor \(P \colon [n]^{d} \to \R\) defined by:
\begin{equation}\label{eqn: GL model}
    P(i_1, \ldots, i_d) = p(x_{i_1}, \ldots, x_{i_d}), \quad p(z_1, \ldots, z_d) = \exp\left(-\frac{\gamma}{2} \sum_{i = 1}^{d-1} \left(z_{i} - z_{i+1}\right)^2 - \frac{\lambda}{2} \sum_{i = 1}^{d} \left(1 - z_i^2\right)^2\right),
\end{equation}
where the dimension is \(d = 30\), and the Laplacian term and double-well term parameter is \(\gamma = \lambda = 0.16\). The formula for \(P\) in \Cref{eqn: GL model} is a discretized Ginzburg-Landau model.

We test the performance of the proposed two-stage approach on the model. In the first stage, we employ the TT-cross algorithm, utilizing the analytic formula of \(P\) as defined in \Cref{eqn: GL model}, with a maximal internal rank of \(r_{\max} = 10\).
The first stage outputs a TT approximation \(\tilde{P}\). The average relative entry-wise error on the randomly chosen \(10^{5}\) entries is \(1.6 \times 10^{-10}\), which shows that TT-cross is successful in compressing \(P\) with \(\tilde{P}\).

In the second stage, we perform the NTT fitting procedure on \(\tilde{P}\) using \Cref{alg: alternating minimization}.
We use a maximal rank of $\a = 20$ for the NTT ansatz $P_{\G}$.
The results for the Ginzburg-Landau model are shown in \Cref{fig:numerics-gl}.
We observe that \Cref{alg: alternating minimization} finds NTT solutions that achieve machine precision ($\approx 10^{-14}$) in the squared relative Frobenius error between the NTT $P_{\G}$ and the output of the first stage $\tilde{P}$.
Furthermore, applying the proposed acceleration strategy leads to a significant speed-up in the convergence of \Cref{alg: alternating minimization}, with the combination of an adaptive barrier schedule and preconditioned CG for Newton steps being the most efficient.

Finally, we report the performance of \(P_{\G}\) in terms of its evaluation on the \(10^{5}\) randomly chosen entries. We pick the NTT ansatz \(P_{\G}\) obtained using the adaptive barrier schedule and preconditioned CG. On those points, the average relative entry-wise error is \(3.9 \times 10^{-7}\), which shows that our procedure is successful in compressing \(P\) with an NTT ansatz.

\paragraph{Example 2: Gibbs kernel tensor}
We let \(n = 50\) and we let \((x_i)_{i=1}^{n}\) be a uniform grid point on \([-1, 1]\) of length \(n\).
We consider the unnormalized distribution tensor \(P \colon [n]^{d} \to \R\) defined as follows:
\begin{equation}\label{eqn: Gibbs kernel}
  P(i_1, \ldots, i_d) = p(x_{i_1}, \ldots, x_{i_d}), \quad p(z_1, \ldots, z_d) = \exp\left(-\beta\sum_{i=1}^{d-1}\lVert z_{i}-z_{i+1}\rVert\right),
\end{equation}
where the dimension is \(d = 30\) and the inverse temperature is \(\beta = 0.3\).
This setting is of particular practical importance as \(P\) is the Gibbs kernel tensor for the multi-marginal Sinkhorn algorithm \cite{marino2020optimal} when the transport cost is \(c(z_1, \ldots, z_d) = \sum_{i=1}^{d-1}\lVert z_{i}-z_{i+1}\rVert\).
Giving \(P\) an NTT ansatz is enormously beneficial if one were to perform the Sinkhorn scaling algorithm on the NTT ansatz instead of the full tensor. 

For this example, we consider a compression setting in which the goal is to approximate \(P\) with an NTT ansatz of a small rank, and we note that \(P\) admits an NTT ansatz if one takes the rank to be \(r = n\). For the first stage, we use the adaptive cross approximation method in \cite{savostyanov2011fast} to compress \(P\) as a tensor train ansatz of a maximal internal rank \(r_{\max} = 20\).
The average relative entry-wise error on the randomly chosen \(10^{5}\) entries is \(8.5 \times 10^{-3}\), which shows that the first stage is moderately successful in compressing \(P\) with \(\tilde{P}\), but the error is not negligible.

In the second stage, we use a maximal rank of $\a = 25$ for the NTT ansatz $P_{\G}$.
The numerical results for the Gibbs kernel tensor are shown in \Cref{fig:numerics-gibbs}.
We observe that \Cref{alg: alternating minimization} allows a fast decrease of the squared relative Frobenius error. The combination of an adaptive barrier schedule and preconditioned CG is the most efficient approach in this case.

Similar to the previous example, we use the randomly chosen \(10^{5}\) entries to compare \(P\) and \(P_{\G}\). We use \(P_{\G}\) obtained using the adaptive barrier schedule and preconditioned CG. On those points, the average relative entry-wise evaluation error is \(8.5 \times 10^{-3}\), which is approximately the entry-wise error from the first stage output \(\tilde{P}\). Therefore, we observe that the second-stage fitting does not introduce a significant fitting error compared to the first stage.

\paragraph{Example 3: Heavy-tail symmetric distribution}
We let \(n = 50\) and we let \((x_i)_{i=1}^{n}\) be a uniform grid point on \([0, 2]\) of length \(n\). We consider the unnormalized distribution tensor \(P \colon [n]^{d} \to \R\) defined as follows:
\begin{equation}\label{eqn: Cauchy kernel}
  P(i_1, \ldots, i_d) = p(x_{i_1}, \ldots, x_{i_d}), \quad p(z_1, \ldots, z_d) = \frac{1}{1 + z_1^2 + \cdots + z_d^2},
\end{equation}
where we take \(d = 30\).
The distribution \(p\) is the Cauchy distribution when \(d = 1\). The tensor in \Cref{eqn: Cauchy kernel} is different than the Ginzburg-Landau model in that the formula is symmetric in all of the variables.

In the first stage, we use the TT-cross algorithm using the analytic formula of \(P\) as defined in \Cref{eqn: Cauchy kernel}.
We use a maximal internal rank of \(r_{\max} = 20\) for \(\tilde{P}\).
For \(\tilde{P}\), the average relative entry-wise error on the randomly chosen \(10^{5}\) entries is \(6 \times 10^{-10}\), which shows that TT-cross is successful in compressing \(P\) with \(\tilde{P}\).

In the second stage, we similarly take \(\a = 20\) for the internal rank of \(P_{\G}\).
The numerical results for the heavy-tail symmetric tensor are shown in \Cref{fig:numerics-cauchy}.
We observe that \Cref{alg: alternating minimization} allows a fast decrease of the squared relative Frobenius error. In this case, the adaptive barrier schedule with full Newton has the best overall performance in wall-clock time. Moreover, the results show that using CG or preconditioned CG can provide acceleration in the initial stage.

Lastly, we report the relative entry-wise error on the randomly chosen \(10^{5}\) entries between \(P\) and \(P_{\G}\). We pick the \(P_{\G}\) obtained using the adaptive barrier schedule and preconditioned CG, and we report a loss of \(7.6 \times 10^{-6}\). The result shows that our procedure is successful in compressing \(P\) with an NTT ansatz.

\paragraph{Remark on CG performance}
We give a remark regarding the performance comparison between full Newton (using a direct solver to obtain the Newton direction) and CG. From \Cref{fig:numerics}, one can see that full Newton in some cases is more efficient than CG or preconditioned CG in reaching a high-accuracy solution. We remark that this is because a direct linear system solver is quite efficient for small linear systems. Therefore, when one chooses a relatively small internal rank \(\a\) (e.g. \(\a = 15\)), it might be beneficial to run full Newton.

\subsection{Density estimation}\label{sec: DE numerics}

\begin{figure}[!ht]
  \centering
    \subcaptionbox{Example 4, Ising model.\label{fig:numerics-Ising}}[0.76\linewidth]{
    \includegraphics[width=\linewidth]{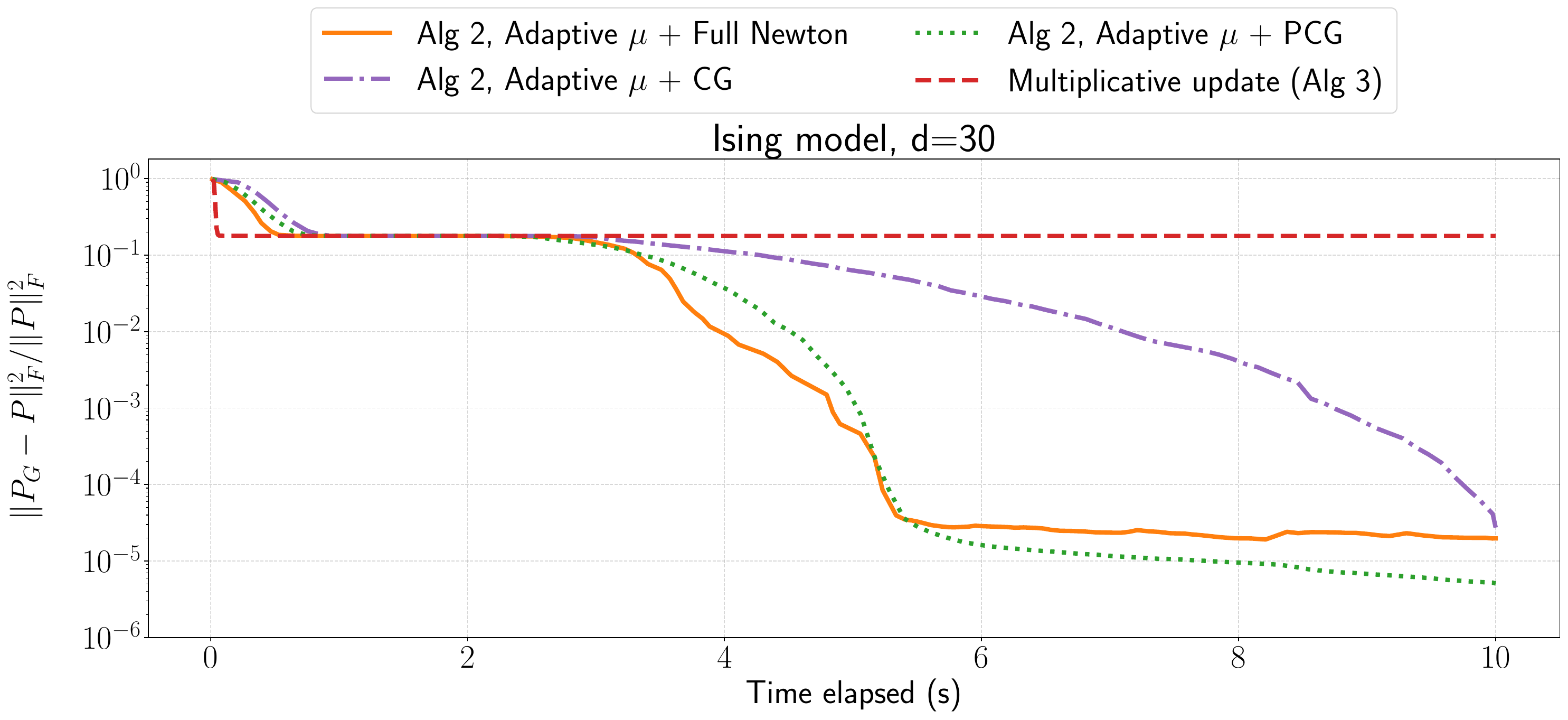}
  }
    \hfill
  \centering
  \subcaptionbox{Example 5, Heisenberg model.\label{fig:numerics-Heisenberg}}[0.75\linewidth]{
    \includegraphics[width=\linewidth]{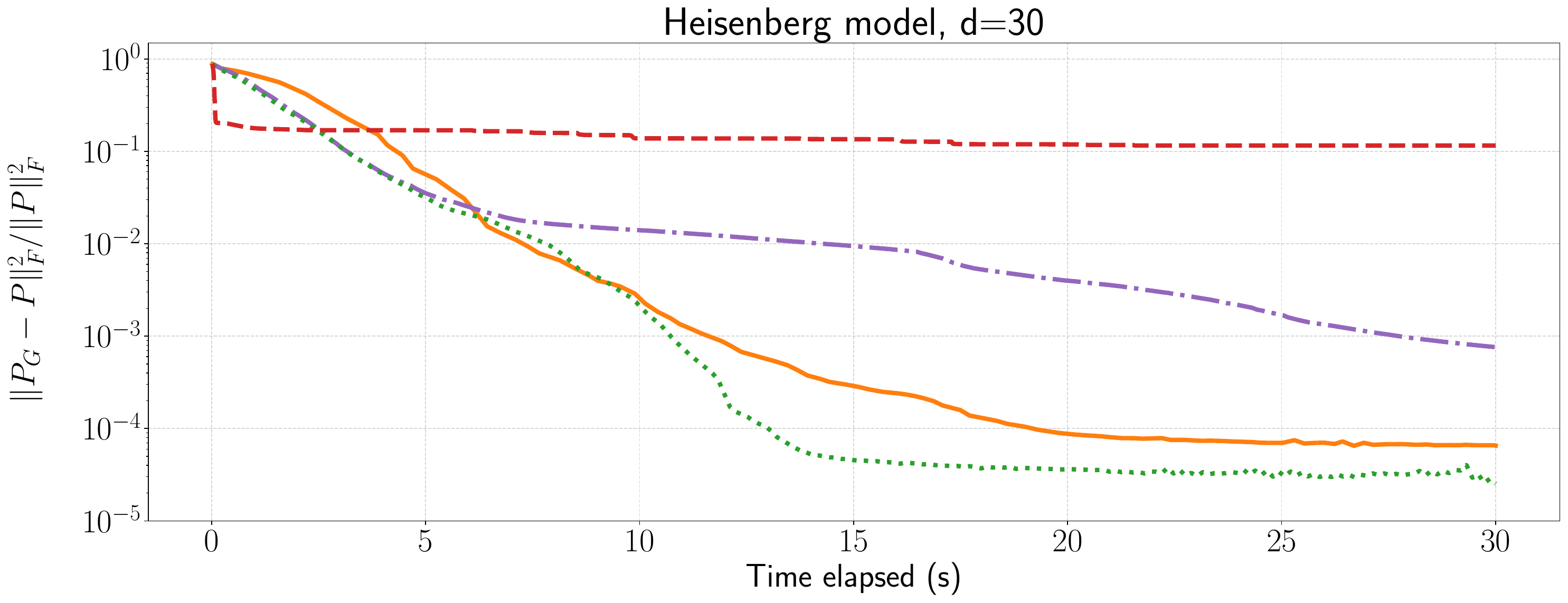}
  }

  \caption{Numerical results for NTT fitting in the density estimation setting. The details can be found in \Cref{fig:numerics}. For Example 4 we choose \(\sigma = 0.2\), and for Example 5 we choose \(\sigma = 0.005\). 
  }
  \label{fig:numerics_DE}
\end{figure}

The density estimation setting is subject to a larger noise level compared to the variational inference setting. One has samples \(\left(y_{1}^{(j)}, \ldots, y_{d}^{(j)}\right)_{j=1}^{N} \subset [n]^d\) which are distributed according to \(P\). The first stage uses the TT-sketch algorithm to compress \(P\) as a tensor train ansatz \(\tilde{P}\). In this case, the tensor \(\tilde{P}\) only converges to \(P\) at a Monte-Carlo rate \(\frac{1}{\sqrt{N}}\), and therefore the obtained tensor \(\tilde{P}\) contains many non-negligible negative entries. As the NTT fitting error cannot go to zero, our goal for NTT fitting is to reach below \(10^{-5}\) in the relative squared Frobenius loss. In this case, the fixed regularization strategy converges significantly slower than the adaptive barrier strategy; therefore, we omit it from the plot for simplicity.

To evaluate the accuracy of the obtained NTT model in fitting the given samples, we compare the negative-log-likelihood (NLL) of the obtained model \(P_{\G}\) against the ground truth model \(P\). The NLL of the NTT model \(P_{\G}\) is defined by 
\[
\mathrm{NLL}(P_{\G}) = \frac{1}{N}\sum_{j=1}^{N}\log\left(P_{\G}(y_{1}^{(j)}, \ldots, y_{d}^{(j)})\right) - \log(Z_{\G}),
\]
where \(Z_{\G} = \sum_{i_{[d]}}P_{\G}(i_{[d]})\) is the normalization constant of \(P_{\G}\). The NLL of \(P\) is defined likewise. The training is successful if the NLL of \(P_{\G}\) is close to that of \(P\). The TT-sketch output \(\tilde{P}\) cannot be used to report NLL because \(\tilde{P}\) often contains negative entries.

\paragraph{Example 4: 1D periodic Ising model}
We consider a ferromagnetic Ising model with periodic boundary conditions.
In this case, one has \(n = 2\) with \((x_1, x_2) = (-1, 1)\).
The distribution tensor \(P \colon [2]^{d} \to \R\) is defined by \[
  P(i_1, \ldots, i_d) \propto \exp{\left(\beta\sum_{ k - k' \equiv 1 \, \mathrm{mod} \,d}x_{i_k} x_{i_{k'}}\right)},\]
where the dimension is \(d = 30\) and the inverse temperature is \(\beta = 0.5\). We note that \(P\) is representable by a tensor train ansatz with rank \(r_{\mathrm{max}} = 4\). We simulate \(N = 5 \times 10^{5}\) samples with Monte-Carlo Markov chain methods using the analytic formula of \(P\).

In the first stage, we compress \(P\) by using the TT-sketch algorithm on the samples. For the second stage, we perform NTT fitting using a maximal rank of $\a = 10$ for the ansatz $P_{\G}$.
The results for NTT fitting are in \Cref{fig:numerics-Ising}.
One can see that our proposal is significantly more efficient than the multiplicative update algorithm, which is essentially stationary.
Moreover, the choice of adaptive barrier schedule and preconditioned CG is the most efficient approach. 

We test the performance of NTT compression using the NLL metric. The NLL of \(P_{G}\) is 17.467, and the NLL of \(P\) is 17.468. The result shows that the obtained model \(P_{\G}\) fits the generated samples as well as the ground truth model \(P\).

\paragraph{Example 5: 1D Heisenberg model}
Let \(d = 30\) be the dimension. We consider a probabilistic model arising from the ground state of a 1D antiferromagnetic Heisenberg model with \(d\) sites. Let \((\sigma^{X}_{k}, \sigma^{Y}_{k}, \sigma^{Z}_{k})\) be the Pauli matrices on site \(k\). We consider the Hamiltonian
\begin{equation}\label{eqn: heisenberg}
    H =\sum_{ k - k' = 1} \left(\sigma_{k}^{X} \sigma_{k'}^{X}+\sigma_{k}^{Y} \sigma_{k}^{Y}+\sigma_{k}^{Z} \sigma_{k'}^{Z}\right),
\end{equation}
and we obtain the ground state \(\psi \colon [2]^{d} \to \C\) by running DMRG with a maximal internal rank \(r_{\mathrm{max}} = 6\). From the obtained ground state \(\psi\), the distribution tensor \(P \colon [2]^{d} \to \R\) is defined by \[
  P(i_1, \ldots, i_d) = \lvert \psi(i_1, \ldots, i_d) \rvert^2,\]
where in particular \(P\) is normalized as \(\psi\) is a normalized vector in \(\C^{2^d}\). The distribution tensor \(P\) encodes the measurement outcome of \(\psi\) under computational basis measurement. We generate \(N = 8 \times 10^6\) samples of \(P\) by simulating computational basis measurement on \(\psi\).

In the first stage, we compress \(P\) by a tensor train \(\tilde{P}\) with rank \(r_{\mathrm{max}} = 10\). In the second stage, we use an NTT ansatz \(P_{\G}\) of internal rank \(\a = 20\), and the result is plotted in \ref{fig:numerics-Heisenberg}. One can see that the adaptive barrier, combined with preconditioned CG, yields the best result. In this case, the multiplicative update algorithm decreases in error, but the convergence is quite slow.

We use the output of preconditioned CG to compare the log likelihood. The NLL of \(P_{\G}\) is \(12.55\), and the NLL of \(P\) is \(12.53\). The result shows that the obtained model \(P_{\G}\) fits the generated samples roughly as well as the ground truth model \(P\). We note that the accuracy of the 1D Heisenberg model case is lower than that of the Ising model case. The reason for this lower accuracy is that the Heisenberg model is significantly more complex than the Ising model and requires a substantially larger number of samples to be modeled accurately.

\section{Discussion}\label{sec: conclusion}
We introduced a novel two-stage approach for variational inference and density estimation with the non-negative tensor train ansatz.
The approach enjoys fast convergence and is significantly more efficient than the multiplicative update algorithm derived from the Lee-Seung algorithm.
An interesting future direction is to utilize the NTT ansatz for the coefficient tensor in the functional tensor train ansatz \cite{chen2023combining}, which would enable the representation of a non-negative density function in the continuous setting. Lastly, we anticipate that the combination of the proposed approach with more sophisticated techniques can lead to further improvement in terms of training efficiency in the NTT fitting procedure.

\bibliographystyle{siamplain}
\bibliography{reference}

\end{document}